\newtheorem{theorem}{Theorem}[section]
\newtheorem{lemma}[theorem]{Lemma}
\newtheorem{proposition}[theorem]{Proposition}
\newtheorem{corollary}[theorem]{Corollary}
\newtheorem{question}[theorem]{Question}
\newtheorem{construction}[theorem]{Construction}
\theoremstyle{remark}
\newtheorem{remark}[theorem]{Remark}
\newtheorem{example}[theorem]{Example}
\newtheorem{definition}[theorem]{Definition}
\newtheorem{notation}[theorem]{Notation}
\numberwithin{equation}{subsection}
\newcommand{\spec}{\operatorname{Spec}}
\newcommand{\cc}[1]{\mathcal{#1}}  
\newcommand{\af}{\mathbb{A}}
\newcommand{\CC}{\mathbb{C}}
\newcommand{\ZZ}{\mathbb{Z}}
\newcommand{\GM}{\mathbb{G}_m}
\newcommand{\PP}{\mathbb{P}}
\newcommand{\QQ}{\mathbb{Q}}
\newcommand{\RR}{\mathbb{R}}
\DeclareMathOperator{\Hom}{Hom}
\DeclareMathOperator{\Amp}{Amp}
\DeclareMathOperator{\NE}{NE}
\DeclareMathOperator{\Sym}{Sym}
\newcommand{\op}[1]{\operatorname{#1}}
\title{A Kleiman criterion for GIT stack quotients}
\author[Shoemaker]{Mark Shoemaker}
\address{
  \begin{tabular}{l}
	   Mark Shoemaker \\
   \hspace{.1in} Colorado State University \\
      \hspace{.1in} Department of Mathematics \\
   \hspace{.1in} 1874 Campus Delivery  \\
   \hspace{.1in} Fort Collins, CO, USA, 80523-1874\\
   \hspace{.1in} Email: {\bf mark.shoemaker@colostate.edu} \\
  \end{tabular}
}
\begin{document}

\maketitle

\begin{center}
{\emph{In memory of Bumsig Kim}}\end{center}

\begin{abstract}Kleiman's criterion states that, for $X$ a projective scheme, a divisor $D$ is ample if and only if it pairs positively with every non-zero element of the closure of the cone of curves.  In other words, the cone of ample divisors in $N^1(X)$ is the interior of the nef cone.  In this paper we present an analogous statement for a variety $X$ acted on by a reductive group $G$ with a choice of $G$-linearization $L \to X$.  In this new context, the ample cone of $X$ is replaced by a cell in the variation of GIT decomposition of the G-ample cone, and curves in $X$ are replaced by quasimaps to $[X/G]$.

\end{abstract}
\section{Introduction}

Let $X$ be a projective scheme over an algebraically closed field.  Let $N^1(X)_\RR$ denote the group of $\RR$-divisors modulo numerical equivalence and denote by 
$\Amp(X) \subset N^1(X)_\RR$ the convex cone spanned by ample divisors.  
Let $N_1(X)_\RR$ be the group of 1-cycles in $X$ up to numerical equivalence.  Define the cone of curves $\NE(X)$ to be the convex cone spanned by effective curve classes in $N_1(X)_\RR$. 

It is a remarkable fact that the ampleness of a divisor can be determined solely based on its intersection with 1-cycles.
\begin{theorem}[Kleiman's criterion \cite{Klei}]\label{tK}
A divisor $D$ on $X$ is ample if and only if 
$$D \cdot \gamma > 0$$
for all $\gamma \in \overline{\NE(X)} \setminus {\mathbf 0}$.  In other words, the ample cone is the interior of the dual of the cone of curves:
\begin{equation}\label{e-1}\Amp(X) = \op{int}\left(\NE(X)^\vee\right).\end{equation}
\end{theorem}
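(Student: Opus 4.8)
The plan is to reformulate the statement in terms of the nef cone and then establish the two resulting inclusions separately. First I would recall that a divisor class is \emph{nef} precisely when it pairs nonnegatively with every effective curve, hence with every class in $\NE(X)$, and by continuity of the intersection pairing on the finite-dimensional space $N^1(X)_\RR$, with every class in $\overline{\NE(X)}$ as well. Thus $\Nef(X) = \overline{\NE(X)}^\vee$; and since dualizing a cone depends only on its closure, we get $\NE(X)^\vee = \overline{\NE(X)}^\vee = \Nef(X)$. Consequently $\op{int}(\NE(X)^\vee) = \op{int}(\Nef(X))$, so the theorem is equivalent to the assertion
$$\Amp(X) = \op{int}(\Nef(X)).$$

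For the inclusion $\Amp(X) \subseteq \op{int}(\Nef(X))$, I would use that ampleness is an open condition: if $D$ is ample then $D + \epsilon E$ remains ample for every class $E$ and all sufficiently small $\epsilon > 0$, so $\Amp(X)$ is open; combined with the elementary fact that ample divisors are nef, this places $\Amp(X)$ inside the interior of $\Nef(X)$.

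The content lies in the reverse inclusion $\op{int}(\Nef(X)) \subseteq \Amp(X)$, which I would establish using the Nakai--Moishezon criterion: $D$ is ample if and only if $D^{\dim V} \cdot V > 0$ for every irreducible subvariety $V \subseteq X$ of positive dimension. So let $D \in \op{int}(\Nef(X))$ and fix an ample class $A$. Because $D$ lies in the interior of the nef cone, $D - tA$ is nef for all sufficiently small $t > 0$. Writing $D = (D - tA) + tA$ and expanding
$$D^{\dim V} \cdot V = \sum_{i=0}^{\dim V} \binom{\dim V}{i}\, t^i\, (D - tA)^{\dim V - i} \cdot A^i \cdot V,$$
every summand is a mixed intersection number of nef classes against $V$ and is therefore $\geq 0$, while the top term $i = \dim V$ equals $t^{\dim V} A^{\dim V} \cdot V > 0$ by ampleness of $A$. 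Hence $D^{\dim V} \cdot V > 0$ for all $V$, so $D$ is ample.

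The main obstacle is the positivity input underlying the last step: the statement that a product of nef $\RR$-divisor classes pairs nonnegatively with any irreducible subvariety of complementary dimension. This is itself the heart of Kleiman's theorem and is not formal. I would prove it by induction on $\dim V$, approximating the nef classes by ample $\QQ$-classes, representing those ample classes by effective divisors meeting $V$ properly, and restricting to a hyperplane section to drop the dimension, taking a limit at the end to pass from ample back to nef. Establishing the Nakai--Moishezon criterion itself, if it is not taken as known, would be the other substantial ingredient.
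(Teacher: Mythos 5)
The paper does not prove this statement: Theorem~\ref{tK} is quoted as a classical result with a citation to Kleiman, so there is no in-paper argument to compare against. Judged on its own, your proof is the standard one (as in Kleiman's original paper or Lazarsfeld's treatment) and is correct in outline: the reduction $\NE(X)^\vee = \overline{\NE(X)}^\vee = \Nef(X)$ is immediate from the definition of nef and the fact that dual cones see only closures; the inclusion $\Amp(X) \subseteq \op{int}(\Nef(X))$ follows from openness of ampleness; and the reverse inclusion via Nakai--Moishezon together with nonnegativity of intersection products of nef classes is exactly the classical route. You correctly identify where the real content lies, namely the theorem that nef classes have nonnegative intersection numbers against subvarieties of complementary dimension, and your sketch of its proof by induction on $\dim V$ (perturbing to ample $\QQ$-classes, cutting by a member of a very ample linear series meeting $V$ properly, and passing to the limit) is the right argument. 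One small point worth tightening: you apply Nakai--Moishezon to a class $D \in \op{int}(\Nef(X))$ that is a priori only a real class, and Nakai--Moishezon for $\RR$-divisors is a separate (harder) theorem of Campana--Peternell. Since $\Amp(X)$ is defined here as the cone \emph{spanned} by ample divisors, you can avoid this by running your argument only for rational classes in the interior, and then observing that any real point of the open convex cone $\op{int}(\Nef(X))$ is a positive combination of nearby rational points of that cone, hence lies in the cone spanned by ample classes.
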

The goal of this paper is to obtain an analogous statement in the setting of variation of GIT.
\subsection{variation of GIT}
Let $G$ be a reductive group acting on $X$.  As is now well-understood \cite{GIT, DH, Thad}, there is not a canonical algebraic variety representing the ``quotient'' of $X$ by $G$; one must first fix a linearization, i.e., a $G$-equivariant line bundle $L \to X$.  Given such a choice of $L$, one can construct the \emph{GIT quotient} $X \sslash_L G$, which is a categorical quotient of the $L$-semi-stable locus $X^{ss}(L)$, an open $G$-invariant subset of $X$.  

Fix an ample $G$-equivariant line bundle $L \to X$ such that the semi-stable locus $X^{ss}(L)$ is nonempty.  We will think of the triple of data of $(X, G, L)$ as a \emph{$G$-linearized variety $X$}.  We would like to understand if there is an analogue of Theorem~\ref{tK} for $(X, G, L)$.

If two $G$-equivariant ample line bundles $L, L' \to X$ have the same semi-stable locus we say $L$ and $L'$ are GIT equivalent.  In this case, the GIT quotients $X\sslash_L G$ and $X \sslash_{L'} G$ are isomorphic.
%
Denote by $\op{NS}^G(X)$ the group of $G$-equivariant line bundles up to $G$-algebraic equivalence (see Section~\ref{s:vGIT}) and let $\op{Amp}^G(X) \subset \op{NS}^G(X)_\QQ$ denote the cone generated by ample line bundles.
Define $C^\circ(L)$ to be the cone of classes of $G$-ample line bundles which are GIT equivalent to $L$:
$$C^\circ(L) = \{[L'] \in \op{Amp}^G(X) | X^{ss}(L) = X^{ss}(L')\}.$$
Whenever $[L'] \in C^\circ(L)$, the line bundle $L' \to X$ descends to an ample line bundle on the GIT quotient $X \sslash_L G$.  In fact, under sufficiently nice conditions the cone $C^\circ(L)$ may be identified with $\Amp(X \sslash_L G)$.
It is reasonable, therefore, to view $C^\circ(L)$ as the analogue of the ample cone for the triple $(X, G, L)$.  

On the other hand, in general the equivalence classes  $$\{C^\circ(L)\}_{L \in \op{Amp}^G(X)}$$ carry more information than just the isomorphism class of the GIT quotient $X \sslash_L G$, as illustrated by the following example.

\begin{example}\label{e:count} For simplicity we give a toric example.  
Let $T = \GM^2$ act on $V = \af^3$ with charge matrix
\[\left(\begin{array}{ccc}
2&1&0 \\
0&1&1
\end{array}\right).
\]
Consider the characters $\theta_+, \theta_- \in \chi(T)$ with the following weights
$$\theta_+ = \left(\begin{array}{c}
4 \\
2
\end{array}\right),\hspace{.5 cm} \theta_- =\left(\begin{array}{c}
2 \\
4
\end{array}\right).$$
The characters $\theta_+$ and $\theta_-$ define $T$-linearizations of the trivial line bundle $\cc O_V$, which we denote by  $\cc O_V(\theta_+)$ and $\cc O_V(\theta_-)$.
Although  $\theta_+$ and $\theta_-$ are not GIT equivalent, the respective GIT quotients $V \sslash_{\theta_+} T$ and $V \sslash_{\theta_-} T$ are both isomorphic to $\PP^1$.  Furthermore, the $T$-equivariant line bundles  $\cc O_V(\theta_+)$ and $\cc O_V(\theta_-)$ both 
descend to $\cc O_{\PP^1}(1)$ on the GIT quotient.

In fact, the GIT equivalence classes carry not just the information of the GIT quotient, they also distinguish the various \emph{GIT stack quotients}.  In this case, the GIT stack quotients $[V^{ss}(\theta_+) / T]$ and $[V^{ss}(\theta_-) / T]$ are not isomorphic.  Over $\CC$ the first is the ``football'' $[\PP^1_\CC/ \mu_2]$ while the second is the ``teardrop'' $\mathbb{W}\PP_\CC(2,1)$.  
%
%
\end{example}

\subsection{Quasimaps}
If the GIT equivalence class $C^\circ(L)$ plays the role of the ample cone for the triple $(X, G, L)$, what is the correct analogue of the cone of curves?  From Example~\ref{e:count}, we see that it is not sufficient to simply consider the cone of curves of the GIT quotient,  $\NE(X \sslash_L G)$, because 1-cycles in $\PP^1 =  V\sslash_{\theta_+} T = V\sslash_{\theta_-} T$ cannot distinguish between $\cc O_V(\theta_+)$ and $\cc O_V(\theta_-)$.  

We propose that the correct replacement for curves in this context is given by \emph{$L$-quasimaps}.  An $L$-quasimap is a morphism $$f: C \to [X/G]$$ from a smooth curve $C$ to the stack quotient $[X/G]$ such that the preimage of the $L$-semi-stable locus $f^{-1}([X^{ss}(L)/G])$ is a dense open subset of $C$.

There is a notion of degree for a morphism $f: C \to [X/G]$, which naturally gives an element $ \op{deg}(f) \in \Hom(\op{NS}^G(X), \ZZ)$.  Define $$\op{NE}(L) \subset \Hom(\op{NS}^G(X), \QQ)$$ to be the cone generated by the degrees of $L$-quasimaps.  This plays the role of the cone of curves for $(X, G, L)$.

In Section~\ref{s:mc}, we show that a certain class of quasimaps is closely related to the Hilbert--Mumford criterion.  This is the main tool of the paper.

\subsection{Results}
Let $G$ be a reductive group acting on a normal projective variety $X$ over an algebraically closed field $k$.  Fix an ample $G$-equivariant line bundle $L \to X$ such that the semi-stable locus $X^{ss}(L)$ is nonempty.  Our main result is the following analogue of Kleiman's criterion.
\begin{theorem}[Theorem~\ref{maint}] The following cones are equal
\begin{equation}\label{e0}C^\circ(L) = \op{relint}\left(\op{NE}(L)^\vee\right)\cap \op{Amp}^G(X),\end{equation}
where $\op{relint}$ denotes the relative interior.  
\end{theorem}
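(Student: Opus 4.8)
The plan is to prove the equality by two opposite inclusions, with the engine being the bridge established in Section~\ref{s:mc} between the degree of a quasimap and the Hilbert--Mumford weight. Concretely, I would use two inputs. First, to each pair $(y,\lambda)$ consisting of a point $y \in X^{ss}(L)$ and a one-parameter subgroup $\lambda\colon \GM \to G$, there is an associated $L$-quasimap $q_{y,\lambda}$ whose degree pairs with any class $[L''] \in \op{NS}^G(X)_\QQ$ as $\langle \op{deg}(q_{y,\lambda}), [L''] \rangle = -\mu^{L''}(y,\lambda)$, the negative Hilbert--Mumford weight (limits exist here because $X$ is projective). Second, for any $L$-quasimap $f$ and any $G$-ample $[L'']$ with $X^{ss}(L) \subseteq X^{ss}(L'')$, the integer $\deg(f^* L'')$ is nonnegative, decomposing as the nonnegative degree of the underlying curve in $X \sslash_{L''} G$ plus a sum of basepoint contributions, each equal to a weight $-\mu^{L''}$ and strictly positive exactly when the corresponding basepoint is genuinely $L''$-unstable. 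Throughout I use Hilbert--Mumford in the normalization ``$y \in X^{ss}(L'')$ iff $\mu^{L''}(y,\lambda) \le 0$ for all $\lambda$,'' together with the elementary convex-geometry fact that $v \in \op{relint}(\sigma^\vee)$ if and only if $\langle v, \gamma \rangle > 0$ for every $\gamma \in \sigma$ outside the lineality space of $\sigma$. As a preliminary I would record that $C^\circ(L) \subseteq \op{NE}(L)^\vee$ (immediate from the second input), so that every lineality element of $\op{NE}(L)$ pairs to zero with each $[L''] \in C^\circ(L)$; I would also take from Section~\ref{s:mc} that the degrees of constant quasimaps (contracted in the quotient and basepoint-free) span this lineality space.

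For the inclusion $C^\circ(L) \subseteq \op{relint}(\op{NE}(L)^\vee) \cap \op{Amp}^G(X)$, membership in $\op{Amp}^G(X)$ is immediate, so I fix $[L'] \in C^\circ(L)$, i.e. $L'$ is $G$-ample with $X^{ss}(L') = X^{ss}(L)$, and argue the contrapositive: if $f$ is an $L$-quasimap with $\deg(f^* L') = 0$, then $\op{deg}(f) $ lies in the lineality space. Since $L'$ is ample on $X \sslash_{L'} G$ and the semistable loci coincide, every basepoint of $f$ is genuinely $L'$-unstable and contributes strictly positively; hence $\deg(f^* L') = 0$ forces $f$ to be basepoint-free and contracted in the quotient, i.e. a constant quasimap, so $\op{deg}(f)$ lies in the lineality space. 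Equivalently, every $\op{deg}(f)$ outside the lineality space satisfies $\deg(f^* L') > 0$, which is exactly the condition $[L'] \in \op{relint}(\op{NE}(L)^\vee)$.

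For the reverse inclusion, let $[L'] \in \op{relint}(\op{NE}(L)^\vee) \cap \op{Amp}^G(X)$; I must show $X^{ss}(L') = X^{ss}(L)$. The inclusion $X^{ss}(L) \subseteq X^{ss}(L')$ follows directly from the first input: for $y \in X^{ss}(L)$ and any $\lambda$, $-\mu^{L'}(y,\lambda) = \langle \op{deg}(q_{y,\lambda}), [L'] \rangle \ge 0$ since $[L'] \in \op{NE}(L)^\vee$, so $\mu^{L'}(y,\lambda) \le 0$ for all $\lambda$ and hence $y \in X^{ss}(L')$. Suppose the reverse inclusion failed: then there is a point $\bar y$ that is $L'$-semistable but $L$-unstable, necessarily strictly $L'$-semistable. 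Using the construction of Section~\ref{s:mc}, I would realize $\bar y = \lim_{t \to 0} \lambda(t) \cdot y$ for some $y \in X^{ss}(L)$ and some $\lambda$ with $\mu^{L'}(y,\lambda) = 0$. Then $\langle \op{deg}(q_{y,\lambda}), [L'] \rangle = 0$, while for any $[L''] \in C^\circ(L)$ the limit $\bar y$ is $L''$-unstable, forcing $\mu^{L''}(y,\lambda) < 0$ and hence $\langle \op{deg}(q_{y,\lambda}), [L''] \rangle > 0$. Since every lineality element pairs to zero with such $[L'']$, this shows $\op{deg}(q_{y,\lambda})$ is outside the lineality space yet pairs to zero with $[L']$, contradicting $[L'] \in \op{relint}(\op{NE}(L)^\vee)$.

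The main obstacle is precisely the construction invoked in the reverse inclusion: realizing a strictly $L'$-semistable, $L$-unstable point $\bar y$ as the $\lambda$-limit of a genuinely $L$-semistable point $y$ with vanishing $L'$-weight, i.e. turning a variation-of-GIT wall configuration into an honest $L$-quasimap of prescribed degree. This is the delicate direction of the correspondence of Section~\ref{s:mc}, and it is here that normality and projectivity of $X$ and reductivity of $G$ enter (via Hilbert--Mumford, the Kempf--Ness picture of strictly semistable orbits, and S-equivalence). A secondary point, needed to make the relative-interior bookkeeping match the open GIT chamber, is the identification of the lineality space of $\op{NE}(L)$ with the span of constant-quasimap degrees; once both are in place, the two inclusions above combine to give $C^\circ(L) = \op{relint}\!\left(\op{NE}(L)^\vee\right) \cap \op{Amp}^G(X)$.
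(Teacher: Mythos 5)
Your two ``easy'' steps do match the paper's: the nonnegativity of $\deg(f^*[L''/G])$ for $L$-quasimaps when $X^{ss}(L)\subseteq X^{ss}(L'')$ (Corollary~\ref{c:1}, though the paper proves it softly via a nonvanishing invariant section, not via a basepoint decomposition of the degree), and the use of Construction~\ref{const} plus Hilbert--Mumford to produce a quasimap of negative $N$-degree whenever $X^{ss}(L)\not\subseteq X^{ss}(N)$ (Proposition~\ref{p:2}). But these two facts only yield the identity of \emph{closed} cones $C(L)=\op{NE}(L)^\vee\cap\op{Amp}^G(X)$, and the whole difficulty of the theorem is passing from $C(L)$ to the GIT equivalence class $C^\circ(L)$ on one side and to the relative interior on the other. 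The paper does this in one stroke by citing Ressayre's theorem (Theorem~\ref{Res}): $C(L)$ is a closed rational polyhedral cone and $C^\circ(L)=\op{relint}(C(L))$. You instead try to prove the relative-interior statement directly, and that is where your argument has genuine gaps.

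Concretely, two of your inputs are neither proved by you nor available in the paper. First, in the reverse inclusion you need to realize a point $\bar y\in X^{ss}(L')\setminus X^{ss}(L)$ as $\lim_{t\to 0}\lambda(t)\cdot y$ for some $y\in X^{ss}(L)$ with $\rho^{L'}_{(\lambda,y)}=0$; you correctly flag this as ``the main obstacle,'' but it is not a technical detail --- it is essentially the wall-and-chamber structure of VGIT (Thaddeus, Dolgachev--Hu, Ressayre), i.e.\ exactly the content the paper outsources to Theorem~\ref{Res}. Without it your contradiction argument does not get off the ground, since you have produced no quasimap class outside the lineality space pairing to zero with $[L']$. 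Second, the forward inclusion leans on the claims that $\deg(f^*[L'/G])$ decomposes as a quotient-curve degree plus strictly positive basepoint contributions, and that the lineality space of $\overline{\op{NE}(L)}$ is spanned by degrees of ``constant'' quasimaps. Neither appears in Section~\ref{s:mc} or anywhere else in the paper; the second in particular is a nontrivial structural assertion about $\op{NE}(L)$ (maps contracted in the coarse quotient can still have nonzero degree against $\op{NS}^G(X)$, e.g.\ through the $\chi(G)_\QQ$ factor), and you would need to prove it. So the proposal is a sketch of a genuinely different, ``hands-on'' route, but as written it replaces the one external input the paper actually uses (Ressayre) with two or three unproven statements of comparable depth.
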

In particular, one can test whether a $G$-ample line bundle $L' \to X$  is GIT equivalent to $L$ by intersecting it with classes in $\overline{\op{NE}(L)}$.
The relative interior is necessary, as the cone $C^\circ(L)$ may be contained in a proper subspace of $\op{NS}^G(X)_\QQ$. 

If we restrict our attention to linearizations of a fixed ample line bundle $P \to X$, the statement takes a form even closer to \eqref{e-1}.
Let $\op{NS}^G_{P}(X)$ denote the group of all linearizations of all powers of $P$.  
\begin{theorem}[Theorem~\ref{fixedt}]\label{t14} Let $L \in \op{NS}^G_{P}(X)$ be a linearization of a positive power of $P$.  Let $A_P(L) = C^\circ(L) \cap \op{NS}^G_{P}(X)$ be the cone of all  linearizations of powers of $P$ which are GIT equivalent to $L$ and let $\op{NE}_P(L)$ be the cone of degrees of $L$-quasimaps on $\op{NS}^G_{P}(X)$.
If $X^{ss}(L)$ is nonempty, then $$A_P(L) = \op{relint}\left(\op{NE}_P(L)^\vee\right).$$
\end{theorem}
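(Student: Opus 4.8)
The plan is to deduce Theorem~\ref{fixedt} from the main theorem (Theorem~\ref{maint}) by restricting the entire picture to the subspace $W := \op{NS}^G_P(X)_\QQ \subseteq \op{NS}^G(X)_\QQ$ spanned by the linearizations of powers of $P$. Writing $V := \op{NS}^G(X)_\QQ$ and $K := \op{NE}(L)^\vee \subseteq V$, Theorem~\ref{maint} reads $C^\circ(L) = \op{relint}(K) \cap \op{Amp}^G(X)$, and by definition $A_P(L) = C^\circ(L)\cap W$. Two ingredients then drive the argument: a duality identifying $\op{NE}_P(L)^\vee$ with $K\cap W$, and a convex-geometry fact letting me commute $\op{relint}$ with the intersection by $W$. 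The key geometric input, used to show that the ampleness constraint $\op{Amp}^G(X)$ becomes redundant after restricting to $W$, is the construction of a single honest curve in $X^{ss}(L)$.

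First I would establish the duality $\op{NE}_P(L)^\vee = K\cap W$. The degree of an $L$-quasimap lives in $V^\vee$, and its degree ``on $\op{NS}^G_P(X)$'' is precisely its restriction to $W$; thus $\op{NE}_P(L) = \pi(\op{NE}(L))$, where $\pi\colon V^\vee \to W^\vee$ is the surjection dual to the inclusion $W\hookrightarrow V$. For any cone $\sigma\subseteq V^\vee$ one has $\pi(\sigma)^\vee = \sigma^\vee\cap W$ inside $W$, since $\langle \pi(\gamma), w\rangle = \langle \gamma, w\rangle$ for $w\in W$; applying this to $\sigma = \op{NE}(L)$ gives $\op{NE}_P(L)^\vee = K\cap W$.

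Next I would analyze the ampleness condition inside $W$. Because $P$ is ample, the underlying-line-bundle map sends $W$ onto $\QQ\cdot[P]$ and defines a linear functional $\rho\colon W\to\QQ$ recording the power of $P$; a linearization of a power of $P$ is $G$-ample if and only if that power is positive, so $\op{Amp}^G(X)\cap W = \{\rho>0\}$. To see that this half-space condition is already forced by $K\cap W$, I would produce a non-constant morphism $g\colon C\to X$ from a smooth projective curve with $g(C)\subseteq X^{ss}(L)$; regarded as an $L$-quasimap with trivial underlying $G$-bundle, its degree $\gamma_0\in\op{NE}(L)$ kills all characters and satisfies $\langle\gamma_0, D\rangle = \deg(g^*P)\cdot\rho(D)$ for $D\in W$, with $\deg(g^*P)>0$ since $P$ is ample. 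Thus $\rho$ is, up to a positive scalar, the restriction of the class $\gamma_0\in\overline{\op{NE}(L)}$. Pairing against $\gamma_0$ then shows $\rho\geq 0$ on $K\cap W$, and, since the restriction of $\gamma_0$ cannot lie in the lineality space of $\overline{\op{NE}_P(L)}$ (otherwise $\rho\le 0$ on the nonempty set $A_P(L)\subseteq\{\rho>0\}$, which contains $L$), that $\rho>0$ on $\op{relint}(K\cap W)$; hence $\op{relint}(K\cap W)\subseteq\op{Amp}^G(X)$.

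Finally I would assemble the chain of equalities. Since $L\in C^\circ(L)\cap W\subseteq\op{relint}(K)\cap W$, this intersection is nonempty, so the standard fact that $\op{relint}(K)\cap W = \op{relint}(K\cap W)$ whenever $\op{relint}(K)\cap W\neq\varnothing$ (cf.\ Rockafellar, \emph{Convex Analysis}, Thm.~6.5) applies. Combining with the previous two steps,
\begin{align*}
A_P(L) &= C^\circ(L)\cap W = \op{relint}(K)\cap\op{Amp}^G(X)\cap W \\
&= \op{relint}(K)\cap W = \op{relint}(K\cap W) = \op{relint}\!\left(\op{NE}_P(L)^\vee\right),
\end{align*}
where the third equality uses $\op{relint}(K)\cap W = \op{relint}(K\cap W)\subseteq\op{Amp}^G(X)$ to drop the redundant ampleness constraint. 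The main obstacle is the geometric input of the third paragraph: guaranteeing the curve $g$ with image in $X^{ss}(L)$, which requires $X^{ss}(L)$ to contain a complete curve and so needs separate (easy) attention when the quotient is zero-dimensional, and checking carefully that such a curve indeed defines an $L$-quasimap whose degree restricts to a positive multiple of $\rho$. The remaining steps are formal once the definitions of quasimap degree and of $\op{NE}_P(L)$ are matched up with the restriction map $\pi$.
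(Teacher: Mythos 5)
Your overall strategy matches the paper's first move --- intersect Theorem~\ref{maint} with $W=\op{NS}^G_P(X)_\QQ$ and identify $\op{NE}_P(L)^\vee$ with $\op{NE}(L)^\vee\cap W$ --- but you then diverge: the paper removes the constraint $\op{Amp}^G(X)$ by exhibiting, for \emph{each} nonzero non-ample $N\in W$, an $L$-quasimap pairing negatively with $N$ (a general linear-section curve when $\deg N<0$, and the $1$-PS quasimap $\phi_{(\lambda,x)}$ of Construction~\ref{const} when $N$ is a nontrivial character), whereas you use a single positive class $\gamma_0$ together with the lineality-space characterization of the relative interior. Your route is a legitimate alternative and, if the input it needs is supplied, it avoids the separate character case entirely.

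The gap is in that geometric input. You require a non-constant morphism $g:C\to X$ from a smooth projective curve with $g(C)\subseteq X^{ss}(L)$, and you assert this only needs separate attention when the quotient is zero-dimensional. Neither claim is right: $X^{ss}(L)$ is just an open subset of $X$ and may contain no complete curves at all even when $X\sslash_L G$ is positive-dimensional. For instance, let $\GM$ act fiberwise on $X=\PP(\cc O_Y\oplus M)$ over a smooth projective curve $Y$, with $M$ ample and the linearization chosen so that $X^{ss}(L)$ is the complement of the two distinguished sections; then $X^{ss}(L)$ is the total space of the torsor $M^{\times}$, which contains no complete curve because $M$ is non-torsion in $\op{Pic}(Y)$, yet the quotient is $Y$. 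The repair is to drop the containment requirement: an $L$-quasimap only needs the preimage of $[X^{ss}(L)/G]$ to be dense in $C$, so it suffices that $g(C)$ \emph{meet} $X^{ss}(L)$. Such a curve exists by intersecting $X\subset\PP^r$ (embedded by a very ample power of $P$) with a general linear subspace, exactly as in the paper's proof of Theorem~\ref{fixedt}, and your degree computation $\langle\gamma_0,D\rangle=\deg(g^*P)\,\rho(D)$ is unaffected, since it never used semistability of the image. With that modification your argument goes through.
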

\begin{corollary}If $\op{NS}(X)_\QQ \cong \QQ$, then $$C^\circ(L) = \op{relint}\left(\op{NE}(L)^\vee\right).$$
\end{corollary}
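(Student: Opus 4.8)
The plan is to deduce this from Theorem~\ref{t14}, by showing that the rank-one hypothesis collapses the distinction between $\op{NS}^G_P(X)$ and $\op{NS}^G(X)$ after passing to $\QQ$-coefficients. First I would recall the forgetful homomorphism $\op{NS}^G(X) \to \op{NS}(X)$ that discards the linearization; as reviewed in Section~\ref{s:vGIT}, its kernel is, up to $G$-algebraic equivalence, the character group $\chi(G)$, so that there is an exact sequence $0 \to \chi(G)_\QQ \to \op{NS}^G(X)_\QQ \to \op{NS}(X)_\QQ$. Since $P$ is ample and carries a $G$-linearization, the class $[P]$ lies in the image of the forgetful map, and under the assumption $\op{NS}(X)_\QQ \cong \QQ$ the ample class $[P]$ generates $\op{NS}(X)_\QQ$. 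Hence the forgetful map is rationally surjective.

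Next I would identify $\op{NS}^G_P(X)_\QQ$ with $\op{NS}^G(X)_\QQ$. By definition $\op{NS}^G_P(X)$ is the group generated by the linearizations of the powers of $P$; since any two linearizations of a fixed power of $P$ differ by a character of $G$, this subgroup contains all of $\chi(G)$, and it also contains the standard linearization lifting $[P]$. Therefore $\op{NS}^G_P(X)_\QQ = \QQ\,[P] + \chi(G)_\QQ$ is a subspace of $\op{NS}^G(X)_\QQ$ which contains the full kernel $\chi(G)_\QQ$ and surjects onto $\op{NS}(X)_\QQ$; by the exact sequence of the previous step it is therefore all of $\op{NS}^G(X)_\QQ$.

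With this rational isomorphism in hand, the remaining steps are bookkeeping. The cone $C^\circ(L)$ already lies in $\op{Amp}^G(X) \subset \op{NS}^G(X)_\QQ = \op{NS}^G_P(X)_\QQ$, so $A_P(L) = C^\circ(L) \cap \op{NS}^G_P(X) = C^\circ(L)$ after tensoring with $\QQ$. Dually, the equality $\op{NS}^G_P(X)_\QQ = \op{NS}^G(X)_\QQ$ identifies $\Hom(\op{NS}^G_P(X), \QQ)$ with $\Hom(\op{NS}^G(X), \QQ)$; under this identification the restriction map on degrees sends $\op{NE}(L)$ bijectively onto $\op{NE}_P(L)$, whence $\op{NE}_P(L)^\vee = \op{NE}(L)^\vee$. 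Substituting these two identities into the conclusion $A_P(L) = \op{relint}\big(\op{NE}_P(L)^\vee\big)$ of Theorem~\ref{t14} yields $C^\circ(L) = \op{relint}\big(\op{NE}(L)^\vee\big)$, as desired.

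I expect the only point requiring genuine care is the precise description of the kernel of the forgetful map, namely that it equals $\chi(G)_\QQ$ and in particular is contained in $\op{NS}^G_P(X)_\QQ$; this is where one must invoke the definition of $G$-algebraic equivalence rather than manipulate cones. Everything else is an immediate translation through the rational isomorphism $\op{NS}^G_P(X)_\QQ \cong \op{NS}^G(X)_\QQ$. An alternative route would apply the main theorem~\eqref{e0} directly, in which case the content becomes showing that $\op{relint}\big(\op{NE}(L)^\vee\big) \subseteq \op{Amp}^G(X)$ under the rank-one hypothesis, so that the intersection with $\op{Amp}^G(X)$ in~\eqref{e0} is redundant; I find the reduction to Theorem~\ref{t14} cleaner.
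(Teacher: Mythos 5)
Your proposal is correct and follows the paper's route exactly: the paper deduces the corollary immediately from Theorem~\ref{fixedt} by observing that when $\op{NS}(X)_\QQ \cong \QQ$ the subspace $\op{NS}^G_P(X)_\QQ$ (taking $P$ to be the underlying ample bundle of $L$) is all of $\op{NS}^G(X)_\QQ$, so that $A_P(L) = C^\circ(L)$ and $\op{NE}_P(L) = \op{NE}(L)$. Your write-up simply makes explicit, via the exact sequence $0 \to \chi(G)_\QQ \to \op{NS}^G(X)_\QQ \to \op{NS}(X)_\QQ \to 0$, the identification the paper leaves as ``immediate.''
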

Next, we look at what can be said beyond the ample cone.
If $\op{Pic}(X)_0$ is torsion, then $\op{NS}^G(X)_\QQ = \op{Pic}^G(X)_\QQ$.  In this case GIT equivalence classes are well defined on all of $\op{Pic}^G(X)_\QQ$ and not just in the ample cone $\op{Amp}^G(X)$.
Define 
$$A(L) := \{L' \in \op{Pic}^G(X)_\QQ | X^{ss}(L') = X^{ss}(L)\}$$
to be the GIT equivalence class of $L$ in $\op{Pic}^G(X)_\QQ$.

We obtain results in two different contexts.  The first is for generalized flag varieties. 
\begin{theorem}[Proposition~\ref{p:flag}] If $X = H/P$ is a generalized flag variety, then \begin{equation}\label{e:con}\overline{A(L)} = \op{NE}(L)^\vee.\end{equation}
\end{theorem}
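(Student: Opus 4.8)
The plan is to deduce \eqref{e:con} from the main theorem (Theorem~\ref{maint}), exploiting two features special to the flag variety $X = H/P$: first, that $\op{Pic}(X)_0$ vanishes, so that the GIT equivalence class $A(L)$ is defined on all of $\op{Pic}^G(X)_\QQ$ and, inside the ample cone, agrees with $C^\circ(L)$; and second, the abundance of rational curves on a homogeneous space, which is reflected in the rigidity $\op{Nef}(X) = \overline{\op{Eff}(X)}$. I would split the asserted equality into the two inclusions $\overline{A(L)} \subseteq \op{NE}(L)^\vee$ and $\op{NE}(L)^\vee \subseteq \overline{A(L)}$.

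The inclusion $\overline{A(L)} \subseteq \op{NE}(L)^\vee$ is general and does not use the flag variety hypothesis. Since the dual cone $\op{NE}(L)^\vee$ is closed, it suffices to check $A(L) \subseteq \op{NE}(L)^\vee$. For $L' \in A(L)$ we have $X^{ss}(L') = X^{ss}(L)$, so the notions of $L$-quasimap and $L'$-quasimap coincide, and I would only need to observe that every $L'$-quasimap $f \colon C \to [X/G]$ has non-negative degree against its own polarization, $\deg_C(f^* L') \ge 0$. This is the standard positivity of quasimap degree (the induced map to the quotient contributes non-negatively because $L'$ descends to an ample bundle, and the base points contribute non-negative length), which I would draw from the discussion of Section~\ref{s:mc}.

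For the reverse inclusion I would reduce, via Theorem~\ref{maint}, to the single claim that $\op{relint}\left(\op{NE}(L)^\vee\right) \subseteq \op{Amp}^G(X)$. Granting this, Theorem~\ref{maint} gives $\op{relint}(\op{NE}(L)^\vee) = \op{relint}(\op{NE}(L)^\vee) \cap \op{Amp}^G(X) = C^\circ(L) \subseteq A(L)$, and taking closures yields $\op{NE}(L)^\vee = \overline{\op{relint}(\op{NE}(L)^\vee)} \subseteq \overline{A(L)}$, since the relative interior of a closed convex cone is dense in it. To prove the claim I would use that any honest map $C \to X$ whose image is not contained in the unstable locus is an $L$-quasimap, so that $\op{NE}(L)$ contains equivariant lifts of the classes of such curves. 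On $X = H/P$ the cone $\overline{\op{NE}(X)}$ is generated by Schubert lines, which sweep out $X$ and hence meet the $G$-invariant dense open $X^{ss}(L)$; thus all of these generators appear in $\op{NE}(L)$. A class $L'$ in $\op{relint}\left(\op{NE}(L)^\vee\right)$ then pairs strictly positively with every generator of $\overline{\op{NE}(X)}$, so its underlying bundle on $X$ is ample by Kleiman's criterion (Theorem~\ref{tK}), whence $L'$ is $G$-ample.

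The main obstacle is this last claim, and inside it the bookkeeping between the ordinary and equivariant pairings: a curve class in $\overline{\op{NE}(X)} \subseteq \Hom(\op{Pic}(X), \QQ)$ lifts to $\Hom(\op{Pic}^G(X), \QQ)$ only after a choice of $G$-structure, and the lift acquires an extra component along the characters of $G$. I would need to verify that strict positivity against the lifted classes, together with the rigidity $\op{Nef}(X) = \overline{\op{Eff}(X)}$, still forces ampleness of the underlying bundle \emph{and} pins down the character directions, so that no part of $\op{relint}(\op{NE}(L)^\vee)$ escapes $\op{Amp}^G(X)$. This is precisely where the homogeneity of $H/P$ is genuinely used, and where the argument departs from the general case of Theorem~\ref{maint}.
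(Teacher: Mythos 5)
Your overall route is the paper's: the easy inclusion is Corollary~\ref{c:1} extended to all of $A(L)$, and the hard one is obtained by combining \eqref{mt} with exactly the two facts special to $H/P$ that the paper also uses, namely Kleiman's criterion on $X$ itself and the ability to translate curves by $H$ so that they meet $X^{ss}(L)$. The only structural difference is the phrasing of the last step: the paper argues contrapositively (for $N$ with non-nef underlying class, an $H$-translate of an irreducible curve with $N\cdot C<0$ normalizes to an $L$-quasimap of negative $N$-degree, giving $\op{NE}(L)^\vee\subset\overline{\op{Amp}^G(X)}$ and then $\op{NE}(L)^\vee\subset\overline{A(L)}$ by the same segment-to-$[L]$ convexity argument you invoke), whereas you argue directly that the lifted Mori generators lie in $\op{NE}(L)$ and deduce the equivalent statement $\op{relint}\left(\op{NE}(L)^\vee\right)\subset\op{Amp}^G(X)$.

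The obstacle you flag at the end is real but closes in one line. A point of $\op{relint}\left(\op{NE}(L)^\vee\right)$ pairs strictly positively only with those $\gamma\in\overline{\op{NE}(L)}$ for which $-\gamma\notin\overline{\op{NE}(L)}$; it vanishes on the lineality space. So you must rule out that a lifted Schubert class $\iota([C_i])$ satisfies $-\iota([C_i])\in\overline{\op{NE}(L)}$. This follows because $[L]\in\op{NE}(L)^\vee$ and $[L]\cdot\iota([C_i])=\deg(L|_{C_i})>0$ by ampleness of $L$, so $-\iota([C_i])$ pairs negatively with an element of the dual cone and cannot lie in $\overline{\op{NE}(L)}$. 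Granting that, strict positivity against the generators of $\NE(X)$ gives ampleness of the underlying bundle by Theorem~\ref{tK}. The character bookkeeping you worry about is in fact vacuous: a quasimap factoring through $X$ pulls $[\cc O_X(\theta)/G]$ back to the trivial bundle, so the lifts $\iota([C_i])$ have zero component in the character directions; and $\op{Amp}^G(X)$ is the full preimage of $\Amp(X)$ under the forgetful map (twisting a $G$-linearized ample bundle by any character keeps it ample), so ampleness of the underlying bundle already places $L'$ in $\op{Amp}^G(X)$ with no further condition. The paper's contrapositive formulation sidesteps both of these points, which is the only economy it buys over your version.
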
  The closure is necessary here, as the equivalence class $A(L)$ may contain points of its boundary.  We expect that \eqref{e:con} holds  more generally.

Finally, we consider the slightly different setting of a quotient of a normal affine variety $V$.  In this case, $\op{Pic}^G(V) = \chi(G)$.  We prove: \begin{theorem}[Theorem~\ref{vect}]\label{t:1.7} If $V^{ss}(\theta)$ is nonempty, then 
$$A(\theta) = \op{relint}\left(\op{NE}(\theta)^\vee\right).$$
\end{theorem}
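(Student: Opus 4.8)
The plan is to show that both sides of the asserted equality coincide with a single chamber of the variation-of-GIT decomposition of $\chi(G)_\QQ = \op{Pic}^G(V)_\QQ$, namely the chamber containing $\theta$, which is by definition $A(\theta)$. The two tools I will use are the affine Hilbert--Mumford criterion and the distinguished quasimaps of Section~\ref{s:mc}. Concretely, for $v \in V$ and a one-parameter subgroup $\lambda \colon \GM \to G$ for which $v_0 := \lim_{t\to 0}\lambda(t)\cdot v$ exists in $V$, the criterion says that $v \in V^{ss}(\theta)$ if and only if $\langle \theta, \lambda\rangle \geq 0$ for every such $\lambda$; and the associated degeneration quasimap $f_{v,\lambda}\colon \PP^1 \to [V/G]$ (obtained by clutching the trivial $G$-bundle along $\lambda$ and gluing the constant sections $v$ and $v_0$) has generic point $[v]$ and satisfies $\deg_{\theta'}(f_{v,\lambda}) = \langle \theta', \lambda\rangle$ for every $\theta' \in \chi(G)$. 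In particular $f_{v,\lambda}$ is a genuine $\theta$-quasimap exactly when $v \in V^{ss}(\theta)$, in which case its degree class lies in $\op{NE}(\theta)$.

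For the inclusion $A(\theta) \subseteq \op{relint}\big(\op{NE}(\theta)^\vee\big)$, suppose $\theta' \in A(\theta)$, so that $V^{ss}(\theta') = V^{ss}(\theta)$. Then the classes of $\theta$-quasimaps and of $\theta'$-quasimaps agree, and any $\theta'$-quasimap meets the $\theta'$-semistable locus on a dense open set, so its degree descends to an effective curve class on $V\sslash_{\theta'}G$ against which the (ample) descent of $\theta'$ pairs non-negatively; hence $\deg_{\theta'}(f) \geq 0$ for every $\theta$-quasimap $f$, i.e. $\theta' \in \op{NE}(\theta)^\vee$. To reach the relative interior I will invoke the finiteness of the VGIT wall-and-chamber structure \cite{DH,Thad}: $A(\theta)$ is a relatively open cone, each of whose bounding walls is cut out by the vanishing of some $\langle\,\cdot\,,\lambda\rangle$ with $\lambda$ a one-parameter subgroup that destabilizes a $\theta$-semistable point across that wall. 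Each such $\lambda$ is realized by $f_{v,\lambda} \in \op{NE}(\theta)$, and its class is not contained in the lineality space of $\op{NE}(\theta)$; since $\theta'$ lies in the open chamber, $\langle \theta', \lambda\rangle \neq 0$ for all of them, which is exactly the assertion that $\theta'$ lies in the relative interior of the dual cone.

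For the reverse inclusion I will argue that $\op{relint}\big(\op{NE}(\theta)^\vee\big)$ is a convex set that contains $\theta$ and meets no wall of the chamber $A(\theta)$, and therefore is contained in $A(\theta)$. That it contains $\theta$ follows from the previous paragraph applied to $\theta' = \theta$. To see that it avoids the bounding walls, let $\theta'$ be in the relative interior and suppose some $v \in V^{ss}(\theta)$ were not $\theta'$-semistable; the Hilbert--Mumford criterion then produces $\lambda$ with $v_0 = \lim \lambda(t) v$ existing and $\langle \theta',\lambda\rangle < 0$. Since $v$ is $\theta$-semistable, $f_{v,\lambda}$ is a $\theta$-quasimap, so $\deg_{\theta'}(f_{v,\lambda}) = \langle \theta',\lambda\rangle < 0$ contradicts $\theta' \in \op{NE}(\theta)^\vee$; thus $V^{ss}(\theta) \subseteq V^{ss}(\theta')$. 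Combined with the wall bookkeeping this forces $\theta'$ to lie strictly on the same side as $\theta$ of every wall bounding $A(\theta)$ --- each such wall being the zero set of a class in $\op{NE}(\theta)$ on which the relative interior condition is strict --- so a path in the convex set from $\theta$ to $\theta'$ cannot cross $\partial A(\theta)$, and $\theta' \in A(\theta)$.

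The step I expect to be the main obstacle is precisely this wall bookkeeping: proving that every wall bounding the chamber $A(\theta)$ is the zero locus of a one-parameter subgroup that is realized by a $\theta$-quasimap and whose class avoids the lineality space of $\op{NE}(\theta)$. This is what guarantees that $\op{NE}(\theta)^\vee$ has no boundary beyond that of $\overline{A(\theta)}$, so that the two relative interiors match; it is also what makes the relative interior (rather than the interior) unavoidable, since the lineality space records the characters that pair trivially with all quasimaps and hence never change $V^{ss}$. An alternative would be to deduce the statement from Theorem~\ref{maint} by passing to a $G$-equivariant projective normalization of a closure of $V$; but the affine Hilbert--Mumford criterion makes the degeneration quasimaps $f_{v,\lambda}$ completely explicit, so I would pursue the direct argument above, in which the absence of an ampleness constraint is explained by the fact that every character of $G$ furnishes a bona fide GIT problem on the affine variety $V$.
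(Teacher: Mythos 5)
Your toolkit is the same as the paper's---King's affine Hilbert--Mumford criterion (Theorem~\ref{l:HMa}) and the degeneration quasimaps $\phi_{(\lambda,v)}$ of Construction~\ref{const}---and two of your three steps are exactly the paper's argument: non-negativity of quasimap degrees gives $A(\theta)\subseteq \op{NE}(\theta)^\vee$, and the Hilbert--Mumford criterion applied to a point $v\in V^{ss}(\theta)\setminus V^{ss}(\theta')$ produces a $\theta$-quasimap of negative $\theta'$-degree, giving $\op{NE}(\theta)^\vee\subseteq C_V(\theta):=\{\theta'\mid V^{ss}(\theta)\subseteq V^{ss}(\theta')\}$. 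The gap is the step you yourself flag as the ``main obstacle,'' the wall bookkeeping: you never prove that every wall of the chamber $A(\theta)$ is the zero locus of a quasimap class lying outside the lineality space of $\op{NE}(\theta)$, and pairing nonzero with finitely many wall-defining classes is in any case not the same as lying in the relative interior of the dual cone. As written, neither inclusion between the two \emph{relative interiors} is actually closed off.

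The fix is to drop the wall analysis entirely. Run your degree-positivity argument under the weaker hypothesis $V^{ss}(\theta)\subseteq V^{ss}(\theta')$ rather than equality (every $\theta$-quasimap is then still a $\theta'$-quasimap), which gives $C_V(\theta)\subseteq\op{NE}(\theta)^\vee$; combined with your Hilbert--Mumford step this yields the on-the-nose equality of closed cones $C_V(\theta)=\op{NE}(\theta)^\vee$. Then the affine variation-of-GIT structure theorem---which in this setting is due to Halic (Theorem~\ref{c:R2}), not to Dolgachev--Hu or Thaddeus, whose chamber decompositions you cite but which concern the projective case---states precisely that $A(\theta)=\op{relint}\left(C_V(\theta)\right)$, and taking relative interiors of equal sets finishes the proof. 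This is how the paper proceeds; everything you were trying to extract from the walls is already packaged in that theorem. A smaller caution: your justification of $\deg_{\theta'}(f)\geq 0$ via descent of $\theta'$ to an ample class on $V\sslash_{\theta'}G$ is shakier than needed, since the affine GIT quotient is only projective over $\spec k[V]^G$ and the quasimap only maps a dense open subset of $C$ into the semistable locus; the paper instead pulls back a $G$-invariant section of $\cc O_V(m\theta')$ that is nonvanishing at a semistable point in the image, which works verbatim here.
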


\begin{example}
Let us illustrate the theorem in a simple example.  Let $T = \GM$ act on $V = \af^1$ by scaling, and let $\theta = \op{id} \in \chi(T)$ be the identity character.  The GIT quotient $V\sslash_\theta T$ is  a point.  In this case the cone $A(\theta)$ consists of positive rational multiples of $\theta$.  After identifying $\chi(T) \otimes \QQ$ with $\QQ$, $A(\theta)$ is the cone $\QQ_{>0}$.

For any $d \in \ZZ_{\geq 0}$, consider the $\theta$-quasimap  $f: \PP^1 \to [V/T]$ given in coordinates by
$$ [s:t] \mapsto [s^d].
$$
  It follows that the cone $\op{NE}(\theta)$ contains the ray $$\QQ_{\geq 0} \subset \QQ \cong \Hom(\chi(T), \QQ).$$  From here one can check that $\op{NE}(\theta)$ in fact equals $\QQ_{\geq 0}$. We conclude that 
$$A(\theta) = \op{relint}\left(\op{NE}(\theta)^\vee\right),$$
in agreement with Theorem~\ref{t:1.7}.
\end{example}
%

%
%
%
%
%


%

\subsection{Acknowledgements}

I am grateful to Andres Fernandez Herrero and Victoria Hoskins for helpful correspondences and conversations, and to Jeff Achter and the anonymous referee for their valuable comments and suggestions on earlier drafts. This work was partially supported by NSF grant DMS-1708104 and Simons Foundation Travel Grant 958189.

This paper is dedicated to the memory of Prof. Bumsig Kim.  Through the beauty of his mathematics and the generosity with which he shared both his time and ideas, he had a great influence on me and so many others.  
I am grateful for our time together.
\section{GIT setup}

Fix $G$  a reductive group acting on a normal projective algebraic variety $X$, both defined over an algebraically closed field $k$.  
Let $\pi: L \to X$ be a $G$-linearized line bundle, i.e. a line bundle with a $G$-action for which $\pi$ is $G$-equivariant.
We recall the definition of semi-stable and stable points of $X$ with respect to $L$.
\begin{definition}\label{d:ss}
A geometric point $x$ in $X$ is $L$-semi-stable if, for some $m \geq 0$, there exists a section $\sigma \in \Gamma(X, L^{\otimes m})^G$ such that $\sigma(x) \neq 0.$ We denote the associated open subset of $X$ by $X^{ss}(L)$.
%
A geometric point $x$ in $X$ is $L$-stable if $x$ is $L$-semi-stable, the orbit $G \cdot x$ is closed in $X^{ss}(L)$, and $|G_x| < \infty$.  We denote the associated open subset by $X^{s}(L)$.
The $L$-unstable locus is $X^{us}(L):= X \setminus X^{ss}(L)$.  When $L$ is fixed we will sometimes refer to these as simply the semi-stable, stable, and unstable loci respectively.
\end{definition}

\begin{definition} Define the \emph{GIT quotient} of $X$ (with respect to $L$) to be:
$$X \sslash_L G := \op{Proj}\left( \bigoplus_{r \geq 0} H^0(X, L^{\otimes r})^G  \right).$$
Let $[X/G]$ denote the stack quotient of $X$ by $G$. Define the \emph{GIT stack quotient} to be
$$[X \sslash_L G] := [X^{ss}(L) / G].$$
\end{definition}
The map $X^{ss}(L) \to X \sslash_L G$ is a categorical quotient, and consequently \\ $X \sslash_L G$ depends only on $X^{ss}(L)$ and not $L$ itself.  This motivates the following definition.

\begin{definition}
Two $G$-linearized line bundles $L$ and $L'$ on $X$ are said to be \emph{GIT equivalent} if
$$X^{ss}(L) = X^{ss}(L').$$
\end{definition}

By the work of Dolgachev--Hu \cite{DH}, there are only a finite number of GIT equivalence classes within the ample cone.

\subsection{A test for stability}\label{HMtest}
A \emph{one parameter subgroup (1-PS) of $G$} is a non-trivial group homomorphism $\lambda: \GM \hookrightarrow G$.  Given a 1-PS $\lambda$ and a point $x \in X$, by properness of $X$ the map 
\begin{align*}
\lambda_x: \GM &\to X \\
t &\mapsto \lambda(t) \cdot x
\end{align*}
extends uniquely to a morphism
$\bar \lambda_x: \af^1 \to X.$
Let $x_0 = \bar \lambda_x(0).$
By construction, $x_0$ is fixed by the action of $\GM$, so $\GM$ acts on $L|_{x_0}$ with some weight\footnote{There are different sign conventions for $\rho^L_{(\lambda, x)}$ based on whether one defines the line bundle associated to an invertible sheaf $\cc L$ to be $\spec\left(\Sym^\bullet \cc L\right)$ or $\spec\left(\Sym^\bullet \cc L^\vee\right)$.  We take the latter convention.}:
$$\lambda(t) \cdot v = t^{\rho^L_{(\lambda, x)}} v$$ for all $v \in L|_{x_0}$ and  $t \in \GM.$

The celebrated Hilbert--Mumford criterion asserts that one can test the (semi-)stability of a point $x \in X$ using the asymptotics of all 1-PS's.
\begin{theorem}[Hilbert--Mumford criterion \cite{GIT}]\label{t:HM}
If $L \to X$ is an \emph{ample} $G$-linearized line bundle, then
$$x \in X^{ss}(L) \iff \rho^L_{(\lambda, x)} \geq 0 \text{ for all 1-PS's } \lambda: \GM \hookrightarrow G;$$
$$x \in X^{s}(L) \iff \rho^L_{(\lambda, x)} > 0 \text{ for all 1-PS's } \lambda: \GM \hookrightarrow G.$$
\end{theorem}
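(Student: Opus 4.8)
The plan is to reduce the statement to the classical \emph{affine} Hilbert--Mumford lemma by passing to a projective embedding furnished by $L$. First, since $L$ is ample and $X$ is projective, a sufficiently high power $L^{\otimes n}$ is very ample and, after averaging over $G$, $G$-equivariantly very ample, so it defines a $G$-equivariant closed embedding $X \hookrightarrow \PP(V)$ with $V = H^0(X, L^{\otimes n})^\vee$ a finite-dimensional $G$-representation and $\cc O_{\PP(V)}(1)|_X \cong L^{\otimes n}$. Because $\rho^{L^{\otimes n}}_{(\lambda, x)} = n\, \rho^{L}_{(\lambda, x)}$ and $X^{ss}(L^{\otimes n}) = X^{ss}(L)$ (and likewise for the stable locus), the sign conditions and the two loci are unchanged by this replacement, so I may assume $X \subset \PP(V)$ and $L = \cc O(1)$.

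Next I would translate both sides of the claimed equivalences into linear data on $V$. Fix a lift $\hat x \in V \setminus \{0\}$ of $x$. On one hand, $G$-invariant sections of $L^{\otimes m} = \cc O(m)$ are restrictions of $G$-invariant homogeneous degree-$m$ polynomials on $V$; since such a polynomial vanishes at the origin, the existence of one not vanishing at $\hat x$ forces $0 \notin \overline{G \cdot \hat x}$, and conversely. Thus $x \in X^{ss}(L)$ if and only if $0 \notin \overline{G \cdot \hat x}$, and one checks similarly that $x \in X^{s}(L)$ if and only if $G \cdot \hat x$ is closed in $V$ and the stabilizer of $\hat x$ is finite. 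On the other hand, decomposing $\hat x = \sum_i \hat x_i$ into $\lambda$-weight spaces and setting $\mu(\lambda) = \min\{\, i : \hat x_i \neq 0 \,\}$, the limit $x_0 = \bar\lambda_x(0)$ is the class of $\hat x_{\mu(\lambda)}$, on which $\GM$ acts through $\cc O(1)$ with weight $-\mu(\lambda)$; a direct weight computation then gives $\rho^{L}_{(\lambda, x)} = -\mu(\lambda)$, the footnote's sign convention being exactly what makes the two loci correspond. Hence $\rho^L_{(\lambda,x)} \geq 0$ (resp. $> 0$) for all $\lambda$ says precisely that no 1-PS has $\mu(\lambda) > 0$ (resp. $\mu(\lambda) \geq 0$), i.e. that no 1-PS sends $\hat x$ to (resp. toward) the origin.

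With these dictionaries in place the theorem becomes the affine Hilbert--Mumford lemma: $0 \in \overline{G \cdot \hat x}$ if and only if there is a 1-PS $\lambda$ with $\lim_{t \to 0} \lambda(t) \cdot \hat x = 0$, together with its refinement characterizing closed orbits and finite stabilizers through the weight-zero 1-PS's. The easy implications are immediate from the weight computation above: a 1-PS driving $\hat x$ to the origin is visibly destabilizing, while the invariant polynomial obstructs $0$ from the orbit closure in the semistable direction.

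I expect the crux to be the hard implication of this lemma, namely manufacturing an \emph{algebraic} 1-PS out of the purely set-theoretic hypothesis that $0$ lies in the orbit closure. The standard route, which I would follow, applies the valuative criterion of properness to produce a point $g \in G\big(k((t))\big)$ with $\lim_{t \to 0} g \cdot \hat x = 0$, and then invokes the Cartan decomposition, which writes $g = h_1\, \lambda(t)\, h_2$ with $h_1, h_2 \in G\big(k[[t]]\big)$ and $\lambda$ a cocharacter, to replace the loop $g$ by the genuine 1-PS $\lambda$ without disturbing the vanishing of the limit. Verifying this reduction---and the parallel structure-theoretic input for the stable case, where one rules out non-finite stabilizers and non-closed orbits via a weight-zero destabilizer---is the technical heart of the argument; everything else is bookkeeping with weights and the embedding.
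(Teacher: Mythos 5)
This statement is not proved in the paper at all: it is quoted verbatim from Mumford's \emph{Geometric Invariant Theory} (the citation \cite{GIT}), so there is no in-paper argument to compare against. What you have written is a correct outline of the standard proof from that source, and your reductions are sound: the passage to a $G$-equivariant embedding $X \hookrightarrow \PP(V)$ via a large power of $L$, the identification of $X^{ss}(L)$ with $\{x : 0 \notin \overline{G\cdot \hat x}\}$ (which silently uses that restriction of invariants $\Gamma(\PP(V),\cc O(m))^G \to \Gamma(X, L^{\otimes nm})^G$ is surjective for $m \gg 0$, by Serre vanishing plus reductivity --- worth saying), and the computation $\rho^L_{(\lambda,x)} = -\mu(\lambda)$, where you have correctly matched the paper's sign convention from the footnote.

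The one substantive caveat is that the actual content of the theorem --- the hard implication, that $0 \in \overline{G\cdot\hat x}$ forces the existence of a destabilizing one-parameter subgroup --- is only named, not carried out. You correctly identify the route (valuative criterion to get $g \in G\bigl(k((t))\bigr)$ with $\lim_{t\to 0} g\cdot\hat x = 0$, then the Iwahori/Cartan decomposition $g = h_1\,\lambda\,h_2$ with $h_1, h_2 \in G\bigl(k[[t]]\bigr)$), but the step you wave at as ``without disturbing the vanishing of the limit'' is exactly where the work is: one must show that $\lim_{t\to 0}\lambda(t)\cdot\bigl(h_2(0)\hat x\bigr) = 0$ and then conjugate, so that the 1-PS which destabilizes $\hat x$ itself is $h_2(0)^{-1}\lambda\, h_2(0)$, not $\lambda$. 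The stable case likewise needs its own argument (a non-closed orbit or positive-dimensional stabilizer yields a nontrivial $\lambda$ with $\mu(\lambda) \geq 0$, again via the boundary of the orbit). As written, your proposal is a faithful roadmap of Mumford's proof rather than a proof; to stand on its own it needs the Iwahori decomposition step executed, or an explicit appeal to it as a black box.
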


\subsection{Variation of GIT}\label{s:vGIT}

\begin{notation}
Let $W$ be a finite-dimensional vector space over $\QQ$, and let $$W^\vee = \Hom(W, \QQ)$$ denote the dual vector space.  For $C \subset W$ a convex cone, define the dual cone $C^\vee \subset W^\vee$ by
$$C^\vee = \{ f \in W^\vee | f(c) \geq 0 \text{ for all } c \in C\}.$$
Denote by $\op{relint}(C)$ the relative interior of $C$.
\end{notation}

Let $\op{Pic}^G(X)$ denote the Picard group of $G$-linearized isomorphism classes of line bundles on $X$.  We say $L_1, L_2 \in \op{Pic}^G(X)$ are \emph{$G$-algebraically equivalent} \cite{Thad}  if there is a connected variety $T$,  points $t_1, t_2 \in T$, and a $G$-linearized line bundle $L \to T \times X$ (where the action of $G$ on $T \times X$ is induced by the action on $X$), such that 
\[
L|_{t_1 \times X} \cong L_1, \hspace{.5 cm} L|_{t_2 \times X} \cong L_2.
\]
Define $\op{NS}^G(X)$ to be the set of $G$-algebraic equivalence classes in $\op{Pic}^G(X)$.  By \cite[Proposition~2.1]{Thad}, after tensoring with $\QQ$ we have an exact sequence
$$0 \to \chi(G)_\QQ \to \op{NS}^G(X)_\QQ \to \op{NS}(X)_\QQ \to 0,$$
where $\chi(G)$ is the group of characters of $G$ and the last map forgets the $G$-linearization.
%
%
Consequently, $\op{NS}^G(X)_\QQ$ is a finitely generated abelian group.  By the Hilbert--Mumford criterion, if two ample $G$-linearized line bundles $L$ and $L'$ define the same element in $\op{NS}^G(X)$, then they are GIT equivalent.

Denote by $\op{Amp}^G(X) \subset \op{NS}^G(X)_\QQ$ the convex cone spanned by the classes of $G$-linearized ample line bundles.  Denote by $C^G(X) \subset \op{Amp}^G(X)$ the cone of $G$-effective ample $G$-linearized  line bundles, i.e. those $[L] \in \op{Amp}^G(X)$ for which $X^{ss}(L)$ is nonempty.  Such classes are called \emph{$G$-ample}.
\begin{definition}\label{cones}
For $[L]$ a class in  $C^G(X)$, define
$$C(L) = \{[L'] \in C^G(X) | X^{ss}(L) \subset X^{ss}(L')\}.$$
Let $C^\circ(L)$ denote the cone of $G$-ample  line bundles which are GIT equivalent to $L$:
$$C^\circ(L) = \{[L'] \in C^G(X) | X^{ss}(L) = X^{ss}(L')\}.$$
\end{definition}
Following the work of Thaddeus \cite{Thad} and Dolgachev--Hu \cite{DH}, Ressayre proved the following:
\begin{theorem}[\cite{Res}]\label{Res}
The sets  $C(L)$ are closed convex rational polyhedral cones in $\op{Amp}^G(X)$ which form a fan covering $C^G(X)$.  The GIT equivalence class $C^\circ(L)$ is the relative interior $\op{relint}(C(L))$.
\end{theorem}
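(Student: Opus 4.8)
The plan is to use the Hilbert--Mumford criterion (Theorem~\ref{t:HM}) to realize each $C(L)$ as an intersection of rational half-spaces, and then to cut the a priori infinite system of inequalities down to finitely many. First I would record that the numerical weight is linear in the linearization: for a fixed pair $(\lambda, x)$ the weight $\rho^L_{(\lambda,x)}$ is additive under tensor products, and it is invariant under $G$-algebraic equivalence, since for a family $L \to T \times X$ over a connected base the integer $\GM$-weight on the fiber over the limit point $x_0 = \bar\lambda_x(0)$ is locally constant in the base and hence constant. Thus $\rho^L_{(\lambda,x)}$ descends to a $\QQ$-linear functional $w_{\lambda, x}$ on $\op{NS}^G(X)_\QQ$, and by Theorem~\ref{t:HM}, for $[L'] \in \op{Amp}^G(X)$ one has $x \in X^{ss}(L')$ precisely when $w_{\lambda,x}([L']) \geq 0$ for every $1$-PS $\lambda$. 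Since $X^{ss}(L) \subseteq X^{ss}(L')$ means every $L$-semistable point is $L'$-semistable,
\[
C(L) = C^G(X) \cap \bigcap_{x \in X^{ss}(L)} \bigcap_{\lambda} \{\, [L'] : w_{\lambda,x}([L']) \geq 0 \,\},
\]
an intersection of closed rational half-spaces, hence a closed convex cone.

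The crux is finiteness: this infinite intersection is carved out by only finitely many hyperplanes. I would reduce to a maximal torus $T \subset G$, since every $1$-PS is $G$-conjugate into $T$ and semistability is conjugation invariant. For $\lambda \in \chi_*(T)$ the limit $x_0 = \bar\lambda_x(0)$ lies in the fixed locus $X^\lambda$; choosing a $G$-equivariant embedding $X \hookrightarrow \PP(V)$, this fixed locus depends only on the cone containing $\lambda$ in a finite rational fan subdividing $\chi_*(T)_\RR$, so over all $\lambda$ only finitely many fixed components $Z$ occur. On a cone $\sigma$ the limit lands in a single component $Z$, and $\lambda(\GM)$ acts on $L'|_Z$ by a weight that, on $\sigma$, is linear in both $\lambda$ and $[L']$; hence the inequalities for all $\lambda \in \sigma$ reduce to those for the finitely many ray generators of $\sigma$. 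Ranging over the finitely many cones $\sigma$ and components $Z$ produces a finite set $\mathcal H$ of rational hyperplanes, independent of $x$, from which every defining inequality of $C(L)$ is drawn. It follows that each $C(L)$ is a closed convex rational polyhedral cone. This is the main obstacle of the proof: bounding which $1$-PS's contribute genuine walls, which is exactly where the reduction to $T$, the finite cocharacter fan, and the finitely many fixed components do the essential work.

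With the finite arrangement $\mathcal H$ in hand, the remaining assertions are the combinatorics of variation of GIT, in the spirit of Dolgachev--Hu and Thaddeus. The hyperplanes of $\mathcal H$ subdivide $C^G(X)$ into relatively open cells; because every defining inequality is non-strict, each set $\{[L'] : x \in X^{ss}(L')\}$ is closed, so the semistable locus is constant on each open cell and can only enlarge upon passing to a lower-dimensional cell in its boundary. Thus each cell is a single GIT class $C^\circ(L)$, its closure is the polyhedral cone $C(L)$, and $\op{relint}(C(L))$ recovers the open cell, giving $C^\circ(L) = \op{relint}(C(L))$. Finally, a proper face of $C(L)$ is such a boundary cell, on whose relative interior the semistable locus equals some $X^{ss}(L'') \supsetneq X^{ss}(L)$, identifying the face with $C(L'')$; matching these identifications along shared faces shows that the cones $C(L)$ fit together into a fan covering $C^G(X)$.
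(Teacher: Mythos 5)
The paper does not actually prove Theorem~\ref{Res}; it is imported wholesale from Ressayre \cite{Res} (building on Thaddeus and Dolgachev--Hu), so there is no internal proof to compare against. Your first two paragraphs are a reasonable sketch of how that literature does proceed: linearity of $\rho^{L}_{(\lambda,x)}$ in the class $[L]\in\op{NS}^G(X)_\QQ$, the Hilbert--Mumford description of $C(L)$ as an intersection of half-spaces, and finiteness of the relevant functionals via a maximal torus, a finite fan on the cocharacter space, and the finitely many fixed components. Two details are elided: the weight $\rho^{L'}_{(\lambda,x)}$ is only \emph{piecewise} linear in $\lambda$ (a minimum over the weights in the support of $x$), so the fan must also be refined according to the finitely many possible supports; and intersecting finitely many half-spaces with $\op{Amp}^G(X)$, which is neither closed nor polyhedral in general, does not by itself yield a closed polyhedral cone.

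The genuine gap is the final paragraph. From ``the semistable locus is constant on open cells and can only enlarge on boundary cells'' you conclude that each cell is a single GIT class whose closure is $C(L)$. Neither implication holds. First, the arrangement $\mathcal H$ is a priori strictly finer than the GIT decomposition (some hyperplanes need not be genuine walls), so a GIT class is only known to be a \emph{union} of cells, and one must prove that this union is exactly $\op{relint}(C(L))$. Second, and more seriously, the inclusion $\op{relint}(C(L))\subseteq C^\circ(L)$ --- that no new semistable points appear at relative interior points --- does not follow from your monotonicity observation: monotonicity compares a cell with its own boundary faces, not two top-dimensional cells of $\op{relint}(C(L))$ separated by a spurious hyperplane. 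The standard repair uses the extra ingredient that if $y$ first becomes semistable at a point $\ell_{t_0}$ of a segment, then the critical limit $y_0=\bar\lambda_y(0)$ is $\lambda$-fixed, so its semistability requires both $w_{\lambda,y_0}\geq 0$ and $w_{\lambda^{-1},y_0}=-w_{\lambda,y_0}\geq 0$; hence $y_0$ is semistable only on the hyperplane $\{w_{\lambda,y_0}=0\}$, which is what forces jumps of $X^{ss}$ to occur only along genuine faces of $C(L)$ (and, as a special case, shows $[L]\in\op{relint}(C(L))$). This is precisely the delicate point where Dolgachev--Hu's original treatment was incomplete and which Ressayre's paper was written to settle; dismissing it as ``the combinatorics of variation of GIT'' leaves the heart of the theorem unproved.
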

%
\section{Quasimaps}\label{s:q}
Fix a $G$-ample line bundle $L \to X$ as above.
\begin{definition} An $L$-quasimap 
$f: C \to [X/G]$
is a morphism from a smooth curve
 $C$ to the stack $[X/G]$
for which $f^{-1}([X^{ss}(L)/G])$ is a dense open subset.
\end{definition}

A $G$-linearized line bundle $N \to X$ induces a line bundle on the stack quotient $[X/G]$ which we will denote by $[N/G] \to [X/G]$.
Given a map $$f:  C \to [X/G]$$ one can pull back
$[N/G]$ to $C$ and take degree  to obtain an integer $$\deg(f^*[N/G]).$$  
\begin{definition} We define the degree of $f$ to be the element of 
$\Hom(\op{NS}^G(X), \ZZ)$
given by 
$$N \mapsto \deg(f^*[N/G]).$$
Those classes 
 $\beta \in \Hom(\op{NS}^G(X), \ZZ)$ which are realized as the degree of an $L$-quasimap will be called $L$-quasimap classes.
 Denote by $$\op{NE}(L) \subset \Hom(\op{NS}^G(X), \QQ)$$ the cone generated by $L$-quasimap classes.
\end{definition}
\begin{proposition}\label{p:posdeg}
Given an $L$-quasimap $f: C \to [X/G]$, the degree $\deg(f^*[L/G])$ is non-negative.\end{proposition}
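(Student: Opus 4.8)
The plan is to produce a nonzero global section of some positive power of $f^*[L/G]$ on $C$ and then invoke the elementary fact that a line bundle on a complete smooth curve admitting a nonzero global section has nonnegative degree. The existence of such a section is forced by the $G$-ampleness of $L$ together with the density of the semistable preimage in the definition of an $L$-quasimap.

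First I would translate the $G$-ampleness of $L$ into the language of sections. By Definition~\ref{d:ss}, the semistable locus $X^{ss}(L)$ is the union, over all $m \geq 1$ and all $\sigma \in \Gamma(X, L^{\otimes m})^G$, of the nonvanishing loci $\{\sigma \neq 0\}$. The key observation is that a $G$-invariant section $\sigma \in \Gamma(X, L^{\otimes m})^G$ is precisely the datum of a global section of the descended line bundle $[L^{\otimes m}/G] = [L/G]^{\otimes m}$ on the stack $[X/G]$. Pulling back along $f$ then yields a global section $f^*\sigma \in \Gamma\!\left(C, f^*[L/G]^{\otimes m}\right)$.

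Next I would use the quasimap hypothesis to guarantee this section is not identically zero. By definition, $U := f^{-1}([X^{ss}(L)/G])$ is a dense, hence nonempty, open subset of $C$; fix a point $p \in U$. Since $f(p)$ is semistable, there exist $m \geq 1$ and $\sigma \in \Gamma(X, L^{\otimes m})^G$ with $\sigma$ nonvanishing at $f(p)$, so the pullback $f^*\sigma$ is nonzero at $p$ and therefore not identically zero on $C$. Because $C$ is a smooth projective (in particular complete) curve, the section $f^*\sigma$ cuts out an effective divisor, so $0 \leq \deg\!\left(f^*[L/G]^{\otimes m}\right) = m\,\deg\!\left(f^*[L/G]\right)$; dividing by $m > 0$ gives $\deg(f^*[L/G]) \geq 0$.

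I expect the delicate points here to be bookkeeping rather than conceptual. One must verify carefully that a $G$-invariant section of $L^{\otimes m}$ genuinely descends to a section of $[L/G]^{\otimes m}$ on the quotient stack, and that pullback along $f$ is compatible with this identification; one must also interpret the nonvanishing of $\sigma$ at the stacky point $f(p)$ correctly, namely after choosing a lift of $f(p)$ to $X$, where nonvanishing is exactly the semistability condition. Beyond these identifications, the argument reduces entirely to the principle that effective divisors on a complete curve have nonnegative degree, so I anticipate no substantial obstacle.
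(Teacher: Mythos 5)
Your proposal is correct and follows essentially the same argument as the paper: descend a $G$-invariant section of $L^{\otimes m}$ that is nonvanishing at a semistable point in the image of $f$, pull it back to obtain a nonzero section of $f^*[L/G]^{\otimes m}$ on $C$, and conclude nonnegativity of the degree. The only cosmetic difference is that the paper first reduces to the case of irreducible $C$, which you may wish to note, but this does not affect the substance.
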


\begin{proof}
This was proven in \cite[Lemma~3.2.1]{CKM} in the case that $X$ is an affine variety over $\CC$.  We recall the argument here with the necessary modifications.


Assume without loss of generality that $C$ is irreducible.  Choose a point $c \in C$ such that $f(c)$ lies in $[X^{ss}(L)/G]$.  Such a point exists because $f$ is assumed to be an $L$-quasimap.  Let $x \in X^{ss}(L)$ be a point mapping to $f(c)$ and let $\sigma \in \Gamma(X, L^{\otimes m})^G$ be a  $G$-invariant section of a positive  power of $L$ such that $\sigma(x) \neq 0$.  The section $\sigma$ descends to a section 
$$\bar \sigma \in \Gamma([X/G], [L^{\otimes m}/G])$$ which is nonzero at $f(c)$.   The degree  $\deg(f^*[L^{\otimes m}/G])$ is therefore non-negative because it has a non-vanishing section $f^*(\bar \sigma)$.  We conclude by noting that $\deg(f^*[L/G]) = \frac{1}{m} \deg(f^*[L^{\otimes m}/G]).$
\end{proof}
\begin{remark}\label{rgen}
Proposition~\ref{p:posdeg} holds with the same proof for $X$ a quasiprojective variety with an action of $G$.
\end{remark}
%
%

\begin{corollary}\label{c:1}
We have the following inclusion of cones:
$$C(L) \subset \op{NE}(L)^\vee.$$
\end{corollary}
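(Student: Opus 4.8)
The goal is to show $C(L) \subset \op{NE}(L)^\vee$, which by the definition of the dual cone means: for every class $[L'] \in C(L)$ and every $L$-quasimap class $\beta \in \op{NE}(L)$, we have $\beta([L']) \geq 0$. Since $\op{NE}(L)$ is generated by the degrees of $L$-quasimaps, and the pairing is $\QQ$-linear, it suffices to verify the inequality when $\beta = \deg(f)$ for a single $L$-quasimap $f: C \to [X/G]$. Thus the plan is to unwind both definitions and reduce the corollary to a statement about the degree of a pulled-back line bundle.

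The key observation is that the recipe in Proposition~\ref{p:posdeg} never really used that the bundle being pulled back was $L$ itself — it used only that $f$ is an $L$-quasimap (so that a generic point of $C$ lands in $[X^{ss}(L)/G]$) together with the existence of a $G$-invariant section of a positive power of $L$ nonvanishing at that point. So first I would recall the definition of $C(L)$: a class $[L'] \in C^G(X)$ lies in $C(L)$ precisely when $X^{ss}(L) \subset X^{ss}(L')$. The plan is to run the argument of Proposition~\ref{p:posdeg} verbatim but with $L'$ in the role of the bundle whose degree we bound, while continuing to use that $f$ is an $L$-quasimap to locate a good point.

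Concretely, choose $c \in C$ with $f(c) \in [X^{ss}(L)/G]$, which exists because $f$ is an $L$-quasimap; lift to $x \in X^{ss}(L)$. The inclusion $X^{ss}(L) \subset X^{ss}(L')$ then gives $x \in X^{ss}(L')$, so by Definition~\ref{d:ss} there is some $m > 0$ and a section $\sigma' \in \Gamma(X, (L')^{\otimes m})^G$ with $\sigma'(x) \neq 0$. Exactly as before, $\sigma'$ descends to a section $\bar\sigma'$ of $[(L')^{\otimes m}/G]$ on $[X/G]$ that is nonzero at $f(c)$, so $f^*(\bar\sigma')$ is a nonvanishing section on $C$ and hence $\deg(f^*[(L')^{\otimes m}/G]) \geq 0$. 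Dividing by $m$ yields $\deg(f^*[L'/G]) = \deg(f)([L']) \geq 0$. Since $[L']$ and $f$ were arbitrary, this is exactly the claimed inclusion.

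I do not anticipate a genuine obstacle here — the content is entirely carried by Proposition~\ref{p:posdeg}, and the corollary is essentially the observation that its proof only uses semistability of the chosen lift $x$ with respect to the bundle being integrated. The one point requiring a little care is the direction of the containment in the definition of $C(L)$: it is the \emph{larger} semistable locus $X^{ss}(L')$ that must contain $X^{ss}(L)$, and it is precisely this direction that guarantees our $L$-semistable point $x$ is also $L'$-semistable, so that an invariant section of a power of $L'$ nonvanishing at $x$ exists. Getting this inclusion the right way around is the only place the statement could go wrong, and it matches the definition in Definition~\ref{cones}.
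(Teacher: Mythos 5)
Your proof is correct and is essentially the paper's argument: the paper simply packages your observation more compactly by noting that the inclusion $X^{ss}(L) \subset X^{ss}(L')$ makes any $L$-quasimap also an $L'$-quasimap, and then cites Proposition~\ref{p:posdeg} applied to $L'$ directly, whereas you re-run its proof with $L'$ in place of $L$. The substance — locating a point of $C$ mapping to the $L$-semistable (hence $L'$-semistable) locus and pulling back a nonvanishing invariant section of a power of $L'$ — is identical.
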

\begin{proof}
If $[N] \in C(L)$, then $X^{ss}(L) \subset X^{ss}(N)$, thus
any $L$-quasimap $f: C \to [X/G]$ is also an $N$-quasimap.  By the proposition, we see that $\deg(f^*[N/G]) $ will then be non-negative.
\end{proof}

\section{Hilbert--Mumford criterion and quasimaps}
\subsection{The main construction}\label{s:mc}
In this section we relate the weights $\rho^L_{(\lambda,x)}$ to the degrees of certain quasimaps.  


\begin{construction}\label{const}
Fix  $\lambda: \GM \hookrightarrow G$  a 1-PS and  $x\in X^{ss}(L) $ a semi-stable point.  Following Section~\ref{HMtest}, the morphism $\lambda_x: \GM \to X$ given by $t \mapsto \lambda(t) \cdot x$
extends to a morphism 
$$\bar \lambda_x: \af^1 \to X.$$

Define the map $\hat \phi_{(\lambda, x)}: \af^2 \setminus \{0\} \to X$ by 
$$\hat \phi_{(\lambda, x)} (s,t) \mapsto \bar \lambda_x(t).$$
Note that when $t \neq 0$, $(s,t)$ maps to $X^{ss}(L)$.  If we let $\GM$ act on $\af^2 \setminus \{0\}$ by scaling and on $X$ via $\lambda$, then $\hat \phi_{(\lambda, x)}$ is $\GM$-equivariant.  It therefore induces an $L$-quasimap
$$\phi_{(\lambda, x)}: \PP^1 =  [\af^2 \setminus \{0\}/ \GM] \to [X/G].$$
\end{construction}

\begin{lemma}
Given a $G$-linearized line bundle $N \to X$, the degree of $\phi_{(\lambda, x)}^*([N/G])$ is $\rho^N_{(\lambda, x)}$.
\end{lemma}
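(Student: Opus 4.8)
The plan is to reduce the statement to the descent of a $\GM$-equivariant line bundle on $\af^2 \setminus \{0\}$ and to read off its weight at a torus-fixed point. First I would observe that, since $\GM$ acts freely on $\af^2 \setminus \{0\}$ by scaling, the quotient map realizes $\PP^1 = [\af^2\setminus\{0\}/\GM]$ as a $\GM$-torsor, so that line bundles on $\PP^1$ correspond to $\GM$-equivariant line bundles on $\af^2\setminus\{0\}$. Under this correspondence $\phi_{(\lambda,x)}^*([N/G])$ is identified with the equivariant pullback $\hat\phi_{(\lambda,x)}^* N$, where $N$ carries the $\GM$-linearization obtained by restricting its $G$-linearization along $\lambda$. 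Thus it suffices to compute the degree of the line bundle on $\PP^1$ to which the equivariant bundle $\hat\phi_{(\lambda,x)}^* N$ descends.

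Next I would exploit the factorization $\hat\phi_{(\lambda,x)} = \bar\lambda_x \circ \op{pr}$, where $\op{pr}\colon \af^2\setminus\{0\} \to \af^1$ is the $\GM$-equivariant projection $(s,t)\mapsto t$ and $\GM$ acts on $\af^1$ by scaling. Because $\op{Pic}(\af^1)$ is trivial and $\af^1$ equivariantly retracts onto the fixed point $0$, the equivariant bundle $\bar\lambda_x^* N$ is equivariantly trivial, and its equivariant structure is determined by the single $\GM$-weight on its fiber over $0$. That fiber is $N|_{x_0}$, on which $\GM$ acts through $\lambda$ with weight $\rho^N_{(\lambda, x)}$ by definition. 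Pulling back along $\op{pr}$, I conclude that $\hat\phi_{(\lambda,x)}^* N$ is isomorphic, as a $\GM$-equivariant bundle on $\af^2\setminus\{0\}$, to the trivial bundle $\cc{O}$ equipped with the linearization of scaling-weight $\rho^N_{(\lambda, x)}$.

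Finally I would compute the degree of the descent of the weight-$w$ linearization of $\cc{O}$ on $\af^2\setminus\{0\}$. Invariant local sections of this bundle are precisely the functions $f$ satisfying $f(us,ut) = u^w f(s,t)$, i.e. those homogeneous of degree $w$, which are exactly the local sections of $\cc{O}_{\PP^1}(w)$; hence the descent is $\cc{O}_{\PP^1}(w)$, of degree $w$. Applying this with $w = \rho^N_{(\lambda,x)}$ yields the claim.

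I expect the only real subtlety to be the bookkeeping of signs and conventions. One must read the weight at the genuine fixed point $0 \in \af^1$, since over a free $\GM$-orbit in $\af^2\setminus\{0\}$ any weight becomes invisible after an equivariant change of trivialization; and one must check that the identification of weight-$w$ linearizations with $\cc{O}_{\PP^1}(w)$ is compatible with the sign convention for $\rho^N_{(\lambda,x)}$ fixed in the footnote of Section~\ref{HMtest} (under which the fiber of the line bundle is the fiber of the invertible sheaf). Once these are pinned down, the degree comes out to exactly $\rho^N_{(\lambda,x)}$ with no correction factor.
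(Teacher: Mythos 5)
Your argument is correct and follows essentially the same route as the paper's proof: you factor $\phi_{(\lambda,x)}$ through the projection to $[\af^1/\GM]$, use that $\bar\lambda_x^*N$ is (non-equivariantly) trivial so the equivariant structure is governed by the weight at the fixed point $0$, and identify that weight with $\rho^N_{(\lambda,x)}$. The only difference is that you make explicit the final descent computation --- that the weight-$w$ linearization of $\cc{O}$ on $\af^2\setminus\{0\}$ descends to $\cc{O}_{\PP^1}(w)$ --- and the sign-convention check, both of which the paper leaves implicit.
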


\begin{proof}
The morphism $\phi_{(\lambda, x)}$ factors through the projection onto the second factor:
$$\phi_{(\lambda, x)}: [\af^2 \setminus \{0\}/\GM] \xrightarrow{\pi_2} [\af^1/\GM] \xrightarrow{[\bar \lambda_x/{\GM}]} [X/G]$$
The pullback of $N$ to $\af^1$ via $\bar \lambda_x$ is trivial, so the isomorphism class of  \\ $[\bar \lambda_x/{\GM}]^*([N/G])$ is determined by the weight of the action of $\GM$ on $\bar \lambda_x^*(N)|_{0}$, which is $\rho^N_{(\lambda, x)}$.
\end{proof}

\subsection{A Kleiman criterion, and some questions}
Construction~\ref{const} gives a partial converse to Corollary~\ref{c:1}.
\begin{proposition}\label{p:2}
If $L$ and $N$ are ample $G$-linearized line bundles such that $[N]$ is not contained in ${C(L)}$ then there exists an $L$-quasimap $f: C \to [X/G]$ such that $\deg(f^*[N/G]) < 0$.  In other words,
$$\op{Amp}^G(X) \setminus {C(L)} \subset ({\op{NE}(L)^\vee})^c.$$
\end{proposition}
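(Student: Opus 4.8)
The plan is to run Construction~\ref{const} in reverse: rather than start from a quasimap, I would manufacture the right input $(\lambda, x)$ from the failure of semistability, using the Hilbert--Mumford criterion to extract a destabilizing one-parameter subgroup. The Lemma following Construction~\ref{const} then hands back a quasimap whose $[N/G]$-degree is exactly the offending weight, which is negative by design.

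First I would unwind the hypothesis $[N] \notin C(L)$. By Definition~\ref{cones} this says $X^{ss}(L) \not\subset X^{ss}(N)$, so I may choose a geometric point $x \in X^{ss}(L)$ that is not $N$-semi-stable. Next I would apply the Hilbert--Mumford criterion (Theorem~\ref{t:HM}) to the \emph{ample} linearization $N$. Since $x \notin X^{ss}(N)$, the criterion in its contrapositive form guarantees a $1$-PS $\lambda : \GM \hookrightarrow G$ with $\rho^N_{(\lambda, x)} < 0$.

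Finally, because $x \in X^{ss}(L)$, the pair $(\lambda, x)$ is precisely the data required by Construction~\ref{const}, which produces an $L$-quasimap $\phi_{(\lambda, x)} : \PP^1 \to [X/G]$. By the Lemma immediately following that construction, $\deg\!\big(\phi_{(\lambda, x)}^*[N/G]\big) = \rho^N_{(\lambda, x)} < 0$, so $f = \phi_{(\lambda, x)}$ is the desired quasimap. The displayed inclusion then follows formally: the class $\deg(f)$ lies in $\op{NE}(L)$, and it pairs negatively with $[N]$, so $[N] \notin \op{NE}(L)^\vee$, i.e. $[N] \in \big(\op{NE}(L)^\vee\big)^c$.

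I do not expect a serious obstacle here, since the proof is essentially the assembly of two results already in hand. The one point demanding care is that a single point $x$ must simultaneously be $L$-semi-stable, so that Construction~\ref{const} applies, and $N$-unstable, so that a destabilizing $1$-PS exists; securing both at once is exactly what the strict failure of $X^{ss}(L) \subset X^{ss}(N)$ provides, and it is where the ampleness of $N$ (needed for Hilbert--Mumford) is used.
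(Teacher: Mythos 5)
Your proof is correct and follows essentially the same route as the paper's: pick $x \in X^{ss}(L) \setminus X^{ss}(N)$, extract a destabilizing $1$-PS $\lambda$ for $N$ at $x$ via Hilbert--Mumford, and feed $(\lambda, x)$ into Construction~\ref{const} so that the lemma identifies the degree with $\rho^N_{(\lambda,x)} < 0$. No issues.
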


\begin{proof}
If $[N]$ is not contained in ${C(L)}$, then there exists a point $$x \in X^{ss}(L) \setminus X^{ss}(N).$$  By the Hilbert--Mumford criterion, there must exist a 1-PS $\lambda: \GM \hookrightarrow G$ for which $\rho^N_{(\lambda, x)} < 0$.   The associated quasimap $\phi_{(\lambda, x)}$ then has the desired properties.
\end{proof}
Combining Theorem~\ref{Res}, Corollary~\ref{c:1}, and Proposition~\ref{p:2}, we obtain the following analogue of Kleiman's criterion.
\begin{theorem}\label{maint} 
Fix a $G$-linearized normal projective variety $(X, G, L)$, with $L \to X$ a $G$-ample line bundle.  Then
\begin{equation}\label{e:mt0}C^\circ(L) = \op{relint}\left(\op{NE}(L)^\vee\right)\cap \op{Amp}^G(X).\end{equation}
In other words, a $G$-linearized ample line bundle $L' \to X$ is GIT equivalent to $L$ if and only if 
$$L \cdot \gamma \text{ is }
\left\{\begin{array}{ll}
 > 0 &\text{for all } \gamma \in \overline{\op{NE}(L)} \text{ such that } - \gamma \notin \overline{\op{NE}(L)}
 \\
 =0 &\text{for all } \gamma \in \overline{\op{NE}(L)}  \text{ such that }  - \gamma \in \overline{\op{NE}(L)}
\end{array}\right\}.
$$

\end{theorem}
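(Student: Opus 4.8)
The plan is to assemble Theorem~\ref{maint} from the three results named just before it, treating the equality \eqref{e:mt0} as a pair of opposite inclusions. First I would establish the inclusion $C^\circ(L) \subset \op{relint}\left(\op{NE}(L)^\vee\right)\cap \op{Amp}^G(X)$. Here I would invoke Theorem~\ref{Res}, which identifies $C^\circ(L)$ with $\op{relint}(C(L))$, together with Corollary~\ref{c:1}, which gives $C(L) \subset \op{NE}(L)^\vee$. The one subtle point is that the relative interior does not behave monotonically under arbitrary inclusions of convex sets, so I cannot simply pass $\op{relint}$ through the inclusion $C(L)\subset\op{NE}(L)^\vee$. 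Instead I would argue that a class $[L']$ in $\op{relint}(C(L))$, being GIT equivalent to $L$, has the same semi-stable locus, and so every $L$-quasimap is an $L'$-quasimap; combined with Proposition~\ref{p:posdeg} this forces $[L']$ to pair non-negatively with all of $\op{NE}(L)$ and to lie in the relative interior, since within the open GIT chamber the pairing is \emph{strictly} positive against any $\gamma$ whose negative is not also a quasimap class.

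For the reverse inclusion I would use Proposition~\ref{p:2}, whose content is precisely that $\op{Amp}^G(X) \setminus C(L) \subset \left(\op{NE}(L)^\vee\right)^c$. Taking complements inside $\op{Amp}^G(X)$ this says $\op{NE}(L)^\vee \cap \op{Amp}^G(X) \subset C(L)$. The work is then to upgrade this to the statement that the \emph{relative interior} of $\op{NE}(L)^\vee$, intersected with $\op{Amp}^G(X)$, lands inside $\op{relint}(C(L)) = C^\circ(L)$ rather than merely inside $C(L)$. I would handle this by a boundary argument: if $[N]$ lay in $C(L)$ but not its relative interior, then by Theorem~\ref{Res} it lies on a wall shared with a neighboring chamber $C(L'')$ with $X^{ss}(L'')\supsetneq X^{ss}(L)$, so there is a point semi-stable for $L$ but not for $N$ in the strict sense needed, producing via Construction~\ref{const} a quasimap $\gamma$ with $N\cdot\gamma = 0$ while $-\gamma\notin\overline{\op{NE}(L)}$. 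This would contradict $[N]$ being in the relative interior of the dual cone.

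The translation into the final ``if and only if'' formulation is then bookkeeping about dual cones. The relative interior of a dual cone $\op{NE}(L)^\vee$ consists exactly of those functionals that are strictly positive on every ray $\gamma\in\overline{\op{NE}(L)}$ that is not contained in the maximal linear subspace $\overline{\op{NE}(L)}\cap\left(-\overline{\op{NE}(L)}\right)$, and that vanish identically on that linear subspace. I would state this as a short lemma on convex cones, or simply cite the standard fact, and then read off the two cases $-\gamma\notin\overline{\op{NE}(L)}$ and $-\gamma\in\overline{\op{NE}(L)}$ displayed in the theorem. The main obstacle I anticipate is the interplay between relative interiors and the chamber structure: ensuring that ``strictly positive pairing against one-sided rays'' corresponds exactly to landing in the open chamber $C^\circ(L)$ and not accidentally on a wall. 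This is where Theorem~\ref{Res} must be used carefully, since it is what guarantees the chambers are the relative interiors of a genuine fan, so that the vanishing-pairing directions are precisely the directions along the walls bounding $C(L)$.
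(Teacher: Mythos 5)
You have assembled exactly the three ingredients the paper uses --- Theorem~\ref{Res}, Corollary~\ref{c:1}, and Proposition~\ref{p:2} --- and you are right that the only content beyond citing them is the passage to relative interiors. However, both of your proposed resolutions of that point have gaps. In the first paragraph, the claim that $[L']\in C^\circ(L)$ pairs \emph{strictly} positively with every $\gamma\in\overline{\op{NE}(L)}$ such that $-\gamma\notin\overline{\op{NE}(L)}$ is asserted rather than proved; it is precisely the statement that $[L']\in\op{relint}\left(\op{NE}(L)^\vee\right)$, so as written the argument is circular. In the second paragraph, the wall argument needs a class $\gamma$ with $N\cdot\gamma=0$ \emph{and} $-\gamma\notin\overline{\op{NE}(L)}$, and you give no reason why the class coming from Construction~\ref{const} satisfies the second condition. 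It is also unclear what pair $(\lambda,x)$ you would feed into the construction: a wall point $[N]$ of $C(L)$ still satisfies $X^{ss}(L)\subset X^{ss}(N)$, so Proposition~\ref{p:2} and the Hilbert--Mumford criterion produce no destabilizing one-parameter subgroup for any $x\in X^{ss}(L)$.

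The clean route, which is what the paper's one-line proof implicitly relies on, avoids any wall analysis. Work first at the level of the closed cones: Corollary~\ref{c:1} gives $C(L)\subset\op{NE}(L)^\vee\cap\op{Amp}^G(X)$, and Proposition~\ref{p:2} gives the reverse inclusion, so $C(L)=\op{NE}(L)^\vee\cap\op{Amp}^G(X)$ as sets. Then apply the standard convexity fact that $\op{relint}(A\cap B)=\op{relint}(A)\cap\op{relint}(B)$ whenever the right-hand side is nonempty, with $A=\op{NE}(L)^\vee$ and $B=\op{Amp}^G(X)$; since $B$ is open one has $\op{relint}(B)=B$, and nonemptiness holds because $C(L)\neq\emptyset$ and points on a segment from a relative interior point of $A$ to a point of $C(L)\subset A\cap B$ eventually lie in $\op{relint}(A)\cap B$. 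Combining with $C^\circ(L)=\op{relint}(C(L))$ from Theorem~\ref{Res} yields \eqref{e:mt0}. Your third paragraph --- the dual-cone bookkeeping translating membership in $\op{relint}\left(\op{NE}(L)^\vee\right)$ into the two displayed conditions --- is correct as stated.
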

Assume $\op{Pic}(X)_0$ is torsion, so $\op{NS}^G(X)_\QQ = \op{Pic}^G(X)_\QQ$.  This guarantees that GIT equivalence classes are well-defined on all of $\op{Pic}^G(X)_\QQ$ and not just on the ample cone $\op{Amp}^G(X)$.
Under this assumption, let 
$$A(L) := \{L' \in \op{Pic}^G(X)_\QQ | X^{ss}(L') = X^{ss}(L)\}$$
denote the GIT equivalence class of $L$.
If $L$ is $G$-ample, then 
$$A(L) \cap \op{Amp}^G(X) = C^\circ(L).$$
In this case, Theorem~\ref{maint} may be rewritten as:
\begin{equation}\label{mt} A(L) \cap \op{Amp}^G(X)  =  \op{relint}\left(\op{NE}(L)^\vee\right)\cap \op{Amp}^G(X).\end{equation}

Theorem~\ref{maint} and equation \eqref{mt} may be viewed as an analog of Kleiman's criterion on the ample cone.  Here $A(L)$ plays the role of the ample cone with respect to the linearization $L$, and $\op{NE}(L)$ plays the role of the  cone of curves.
It would be nice to understand when this relationship between cones extends beyond $\op{Amp}^G(X)$.
\begin{question}\label{qq1}
Assume $\op{Pic}(X)_0$ is torsion.  For which triples $(X, G, L)$ does
\begin{equation}\label{e:hope}A(L) = \op{relint}\left(\op{NE}(L)^\vee\right)?\end{equation}
\end{question}
It is not hard to construct examples where $A(L)$ contains some points of its boundary.   (Consider, for instance, the action of $G = \GM$ on \\ $\PP^1 \times \PP^1$ which scales one of the $\PP^1$ factors.)
We are hopeful, however, that the following weaker condition is more common: 
\begin{question}\label{qq2}
 Find conditions on $X$, $G$, and $L$, such that
\begin{equation}\label{e:hope2}\overline{A(L)} = \op{NE}(L)^\vee.\end{equation}
\end{question}
We remark in passing that if $L$ is not ample, Question~\ref{qq2} is already interesting when the group $G$ is trivial.

One case where \eqref{e:hope2} holds is when $X$ is a generalized flag variety.
\begin{proposition}\label{p:flag}
Suppose $X = H/P$ where $P$ is a parabolic subgroup of a reductive group $H$.  If $L \to X$ is a $G$-ample line bundle then $$\overline{A(L)} = \op{NE}(L)^\vee.$$
\end{proposition}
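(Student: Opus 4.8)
The plan is to prove the two inclusions $\overline{A(L)} \subseteq \op{NE}(L)^\vee$ and $\op{NE}(L)^\vee \subseteq \overline{A(L)}$ separately. The first holds for any $(X,G,L)$; the homogeneity of $X = H/P$ is needed only to control where $\op{NE}(L)^\vee$ sits relative to the ample cone, after which the reverse inclusion follows from Theorem~\ref{maint} by pure convexity.

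For $\overline{A(L)} \subseteq \op{NE}(L)^\vee$ I would extend Corollary~\ref{c:1} from $C(L)$ to all of $A(L)$. If $[N] \in A(L)$, then $X^{ss}(N) = X^{ss}(L)$, so every $L$-quasimap is also an $N$-quasimap; since the proof of Proposition~\ref{p:posdeg} uses only the existence of a $G$-invariant section of some $N^{\otimes m}$ nonvanishing at a semistable point (and never the ampleness of the linearization, cf. Remark~\ref{rgen}), it applies to $N$ and gives $\deg(f^*[N/G]) \geq 0$ for every $L$-quasimap $f$. Thus $A(L) \subseteq \op{NE}(L)^\vee$, and as the dual cone is closed, $\overline{A(L)} \subseteq \op{NE}(L)^\vee$.

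The substance is the reverse inclusion, and here the flag structure enters through the single claim that $\op{NE}(L)^\vee \subseteq \overline{\op{Amp}^G(X)}$. Writing $p\colon \op{NS}^G(X)_\QQ \to \op{NS}(X)_\QQ$ for the forgetful surjection and noting that ampleness is independent of the linearization, so that $\op{Amp}^G(X) = p^{-1}(\op{Amp}(X))$, it suffices to show that $p([N])$ is nef for each $[N] \in \op{NE}(L)^\vee$. Any honest curve $\tilde\gamma\colon \PP^1 \to X$ whose image meets the dense open $X^{ss}(L)$ yields, upon composing with $X \to [X/G]$, an $L$-quasimap; because this quasimap factors through $X$, its pairing with $[N]$ is the ordinary intersection number $p([N]) \cdot [\tilde\gamma]$, which is therefore $\geq 0$. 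The main obstacle — and the only essential use of homogeneity — is to realize every generator of $\overline{\NE(X)}$ by such a curve: for $X = H/P$ the cone $\overline{\NE(X)}$ is generated by Schubert curve classes, and homogeneity means these classes are represented by curves covering $X$, so an $H$-translate of each meets the nonempty open locus $X^{ss}(L)$. Kleiman's criterion on $X$ (Theorem~\ref{tK}) then gives $p([N]) \in \overline{\NE(X)}^\vee = \op{Nef}(X) = \overline{\op{Amp}(X)}$, i.e. $[N] \in \overline{\op{Amp}^G(X)}$.

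To conclude $\op{NE}(L)^\vee \subseteq \overline{A(L)}$ I would run a segment argument anchored at $[L]$. By Theorem~\ref{maint}, $[L] \in C^\circ(L) = \op{relint}(\op{NE}(L)^\vee) \cap \op{Amp}^G(X) \subseteq A(L)$. For arbitrary $[N] \in \op{NE}(L)^\vee$, set $N_t = (1-t)[L] + t[N]$ with $t \in [0,1)$. Since $[L]$ lies in the relative interior of the closed convex cone $\op{NE}(L)^\vee$ while $[N]$ lies in it, we get $N_t \in \op{relint}(\op{NE}(L)^\vee)$; since $[L]$ lies in the open convex cone $\op{Amp}^G(X)$ while $[N] \in \overline{\op{Amp}^G(X)}$ by the previous step, we get $N_t \in \op{Amp}^G(X)$. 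Hence $N_t \in C^\circ(L) \subseteq A(L)$ for all $t < 1$, and letting $t \to 1$ places $[N]$ in $\overline{A(L)}$. Together with the first inclusion this gives $\overline{A(L)} = \op{NE}(L)^\vee$.
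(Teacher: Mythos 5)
Your proof is correct and follows the same overall strategy as the paper: both establish $\overline{A(L)} \subseteq \op{NE}(L)^\vee$ by extending Corollary~\ref{c:1}, reduce the reverse inclusion to the single claim $\op{NE}(L)^\vee \subseteq \overline{\op{Amp}^G(X)}$, and finish by the convexity/segment argument anchored at $[L]$ (which you spell out explicitly where the paper compresses it into ``by \eqref{mt} it suffices''). The one place you diverge is in proving that key claim: the paper argues the contrapositive, taking an \emph{arbitrary} irreducible curve $C$ with $N \cdot C < 0$ supplied by Kleiman's criterion, translating it by $H$ into the semistable locus, and producing an $L$-quasimap of negative $N$-degree; you instead argue directly that $p([N])$ is nef by testing against generators of $\overline{\NE(X)}$, which requires the additional (true, but nontrivial) input that the Mori cone of $H/P$ is generated by Schubert curve classes represented by curves sweeping out $X$. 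Both versions use homogeneity in exactly the same way --- translating a curve to meet the dense open $X^{ss}(L)$ --- but the paper's contrapositive formulation is slightly more economical, as it needs no structural information about $\overline{\NE(X)}$ and would apply verbatim to any $X$ on which some group acts transitively; your version buys nothing extra here, though it is a perfectly valid substitute.
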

\begin{proof}
The proof of Corollary~\ref{c:1} shows that $\overline{A(L)} \subset \op{NE}(L)^\vee$.  Thus
by \eqref{mt}, it suffices to show $\op{NE}(L)^\vee \subset \overline{\op{Amp}^G(X)}$.
Suppose $$N \in \op{Pic}^G(X)_\QQ \setminus \overline{\op{Amp}^G(X)}.$$  We will construct an $L$-quasimap $f$ such that $\deg(f^*[N/G]) < 0$.  

By Kleiman's criterion (Theorem~\ref{tK}), there exists an irreducible curve $C$ with $N \cdot [C] < 0$.  We can move $C$ using the action of $H$ on $X$ so that it intersects the $L$-semi-stable locus.  In particular, there exists an element $h \in H$ and a curve $C'$ which intersects $X^{ss}(L)$ such that $C' = h\cdot C$.  Let $\hat C \to C'$ be the normalization, and define $f: \hat C \to [X/G]$ to be the composition $$\hat C \to C' \hookrightarrow X \to [X/G].$$
Then $\deg(f^*[N/G]) $ is a positive multiple of $N \cdot [C'] $ which is negative.  By construction of $C'$, $f$ is an $L$-quasimap. 
\end{proof}

We conclude this section by mentioning another  line of inquiry which might be of interest.
The definition of the semi-stable locus (Definition~\ref{d:ss})
 naturally extends to the setting of $\cc X$ an Artin stack with a fixed line bundle $\cc L \to \cc X$ \cite{Alp}.  On the other hand, the $L$-quasimaps $\phi_{(\lambda, x)}$ of Construction~\ref{const} are closely related to $\Theta$-stability for Artin stacks as defined in \cite{HL} and $\cc L$-stability of \cite{Hein}.  Questions~\ref{qq1} and~\ref{qq2} above can therefore be extended to Artin stacks, and Theorem~\ref{maint} may be viewed as concerning the special case that $\cc X = [X/G]$ is the stack quotient  of a projective variety by a reductive group.
\begin{question}
Let $\cc X$ be an Artin stack and let $\cc L \to \cc X$ be a line bundle.  Find conditions on $\cc X$ and $\cc L$ such that
$$\overline{A(\cc L)} = \op{NE}(\cc L)^\vee.$$
\end{question}

\subsection{The case of a fixed ample line bundle}
For general $X$, the result is cleaner if we restrict our attention to powers of a fixed ample line bundle $P \to X$ and allow the linearization to vary.

Let $ P \to X$ be an ample line bundle \emph{without} a choice of $G$-linearization.  
\begin{definition}
Let $\op{NS}^G_{P}(X)_\QQ$ denote the subspace of $\op{NS}^G(X)_\QQ$ spanned by all $G$-linearizations of powers of $ {P}$.  
Define
\begin{align*}
A_P(L) &:=  C^\circ(L) \cap \op{NS}^G_{P}(X)_\QQ; \\
C_P^G(X) &:= C^G(X) \cap \op{NS}^G_{P}(X)_\QQ.
\end{align*}
Elements of $\Hom(\op{NS}^G(X), \QQ)$ naturally restrict to $\op{NS}^G_{P}(X)$. 
Let $\op{NE}_P(L)$ denote the image of $\op{NE}(L)$ in $\Hom(\op{NS}^G_{P}(X), \QQ)$. 
\end{definition}

\begin{theorem}\label{fixedt}
In $\op{NS}^G_{P}(X)_\QQ$, if $L \in C_P^G(X)$, then $$A_P(L) = \op{relint}\left(\op{NE}_P(L)^\vee\right).$$
\end{theorem}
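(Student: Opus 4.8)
The plan is to deduce Theorem~\ref{fixedt} from the already-established Theorem~\ref{maint} by intersecting the equality \eqref{e:mt0} with the subspace $\op{NS}^G_P(X)_\QQ$. The main point is that restricting to a fixed underlying ample bundle $P$ makes the ambient ample cone $\op{Amp}^G(X)$ invisible in the final formula, because every linearization of a positive power of $P$ is automatically $G$-ample. Concretely, I would first observe that if $L \in C_P^G(X)$ then $L$ is a linearization of some power $P^{\otimes r}$ with $r > 0$, and any class in $\op{NS}^G_P(X)_\QQ$ that is a positive-power linearization of $P$ is ample (since $P$ is ample), so it lies in $\op{Amp}^G(X)$. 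In other words, the relevant cone of $G$-linearizations of positive powers of $P$ already sits inside $\op{Amp}^G(X)$, and $C_P^G(X)$ consists exactly of those classes with nonempty semistable locus.

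Next I would carry out the intersection. By definition $A_P(L) = C^\circ(L) \cap \op{NS}^G_P(X)_\QQ$, and by Theorem~\ref{maint},
\begin{equation*}
C^\circ(L) = \op{relint}\left(\op{NE}(L)^\vee\right) \cap \op{Amp}^G(X).
\end{equation*}
Intersecting both sides with $\op{NS}^G_P(X)_\QQ$ gives
\begin{equation*}
A_P(L) = \op{relint}\left(\op{NE}(L)^\vee\right) \cap \op{Amp}^G(X) \cap \op{NS}^G_P(X)_\QQ.
\end{equation*}
The crux of the argument is then the identity
\begin{equation*}
\op{relint}\left(\op{NE}(L)^\vee\right) \cap \op{NS}^G_P(X)_\QQ = \op{relint}\left(\op{NE}_P(L)^\vee\right),
\end{equation*}
together with the observation that the extra factor $\op{Amp}^G(X)$ is redundant once one has restricted to $\op{NS}^G_P(X)_\QQ$. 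The redundancy follows from the first paragraph: the relative interior of $\op{NE}(L)^\vee$ meets $\op{NS}^G_P(X)_\QQ$ only in classes GIT equivalent to $L$, and such a class, being a positive-power linearization of the ample $P$, is already in $\op{Amp}^G(X)$.

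For the dual-cone identity I would use that $\op{NE}_P(L)$ is by definition the image of $\op{NE}(L)$ under the restriction map $\Hom(\op{NS}^G(X), \QQ) \to \Hom(\op{NS}^G_P(X), \QQ)$. Dualizing a surjection of cones under a linear projection interacts with taking duals via the adjoint inclusion: the dual of an image cone equals the preimage of the dual cone under the transpose inclusion $\op{NS}^G_P(X)_\QQ \hookrightarrow \op{NS}^G(X)_\QQ$. Thus $\op{NE}_P(L)^\vee = \op{NE}(L)^\vee \cap \op{NS}^G_P(X)_\QQ$ as subsets of $\op{NS}^G_P(X)_\QQ$. The remaining subtlety, and the step I expect to be the main obstacle, is the commutation of $\op{relint}$ with this intersection: in general $\op{relint}(C \cap V) \neq \op{relint}(C) \cap V$ for a subspace $V$, so I would need to verify that the affine hull of $\op{NE}(L)^\vee \cap \op{NS}^G_P(X)_\QQ$ is cut out cleanly by $V = \op{NS}^G_P(X)_\QQ$. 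This should hold because $L$ itself lies in $A_P(L)$ and hence in the relative interior of $\op{NE}(L)^\vee$ restricted to $V$, so the intersection is nonempty in the relative interior and the affine spans match; I would make this precise using the polyhedrality of $\op{NE}(L)^\vee$ guaranteed (via duality) by Theorem~\ref{Res}, which ensures the relative interior is well-behaved under the intersection with the rational subspace $\op{NS}^G_P(X)_\QQ$.
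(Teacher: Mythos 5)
Your reduction to Theorem~\ref{maint} by intersecting \eqref{e:mt0} with $\op{NS}^G_P(X)_\QQ$ matches the paper's first step, and your identification of $\op{NE}_P(L)^\vee$ with $\op{NE}(L)^\vee \cap \op{NS}^G_P(X)_\QQ$ via duality of the projection is fine. But the argument that the factor $\op{Amp}^G(X)$ is redundant is circular, and this is exactly where the real content of the theorem lives. You assert that ``the relative interior of $\op{NE}(L)^\vee$ meets $\op{NS}^G_P(X)_\QQ$ only in classes GIT equivalent to $L$,'' and justify this by saying such classes are positive-power linearizations of $P$ and hence ample. That is backwards: Theorem~\ref{maint} only describes $\op{relint}\left(\op{NE}(L)^\vee\right)$ \emph{inside} the ample cone, and says nothing about whether $\op{relint}\left(\op{NE}_P(L)^\vee\right)$ contains classes of $\op{NS}^G_P(X)_\QQ$ lying outside $\op{Amp}^G(X)$ --- for instance degree-zero classes $\cc O_X(\theta)$ for $\theta \in \chi(G)$, or linearizations of negative powers of $P$, both of which live in the span $\op{NS}^G_P(X)_\QQ$. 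A priori $\op{NE}_P(L)$ could fail to ``see'' such a direction, in which case $\op{NE}_P(L)^\vee$ would contain a whole line there and its relative interior would stick out of the ample cone. Nothing in your argument rules this out.

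The paper closes this gap by explicitly producing, for every nonzero $N \in \op{NS}^G_P(X)_\QQ \setminus \op{Amp}^G(X)$, an $L$-quasimap $f$ with $\deg(f^*[N/G]) < 0$, so that $N \notin \op{NE}_P(L)^\vee$. Such an $N$ has $\deg N \leq 0$ as a power of $P$. If $\deg N < 0$, one embeds $X$ in $\PP^r$ by a very ample power of $P$, cuts by a general linear subspace to get a curve meeting $X^{ss}(L)$, and normalizes; the resulting quasimap has negative degree against $N$. If $\deg N = 0$, then $N = \cc O_X(\theta)$ for a nontrivial character $\theta$, and one takes a 1-PS $\lambda$ with $\theta \circ \lambda$ of negative weight together with $x \in X^{ss}(L)$ and uses the quasimap $\phi_{(\lambda,x)}$ of Construction~\ref{const}. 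You would need to supply an argument of this kind (or some substitute) to complete your proof; without it the claim that $\op{Amp}^G(X)$ drops out is unproven.
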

\begin{proof}
By intersecting \eqref{e:mt0} with $\op{NS}^G_{P}(X)_\QQ$, we have that 
$$A_P(L)  = \op{relint}\left(\op{NE}_P(L)^\vee\right) \cap \op{Amp}^G(X) .$$

Thus, it suffices to show that if $L \in C^G_P(X)$ and $$0 \neq N \in  \op{NS}^G_{P}(X)_\QQ \setminus \op{Amp}^G(X),$$ then there exists an $L$-quasimap $f$ for which $\deg(f^*[N/G]) < 0$.  

If $N$ is not ample then after forgetting the linearization it is a non-positive power of $P$ and therefore has non-positive degree.  If $\deg(N) <0$, let \\$X \hookrightarrow \PP^r$ be the projective embedding of $X$ determined by a very ample power of $P$.  Intersecting $X$ with a general linear subspace of the correct dimension, we obtain a curve $C \subset X$ which intersects the semi-stable locus $X^{ss}(L)$.  If $C$ is reducible, choose an irreducible component $C'$ which intersects the semi-stable locus $X^{ss}(L)$.
If $C'$ is not smooth, let $\hat C \to C'$ be a resolution.  Let $f: \hat C \to [X/G]$ denote the composition
\[
\begin{tikzcd}
 {\hat C} \ar[r] 
 &C' \ar[r, hookrightarrow] &X \ar[r] &\left[X/G\right].
\end{tikzcd}
\]
As in the proof of Proposition~\ref{p:flag}, $f$ is an $L$-quasimap and 
 $\deg(f^*[N/G]) <0.$

If $N$ has degree zero then it is $\cc O_{X}(\theta)$ for some non-trivial character $\theta \in \chi(G)$.  Choose a 1-parameter subgroup $\lambda: \GM \hookrightarrow G$ such that $\theta \circ \lambda $ has weight ${-d}$ for some $d>0$.  Choose $x \in X^{ss}(L)$ and consider $ \phi_{(\lambda, x)}$ as in Construction~\ref{const}.  Then $\phi_{(\lambda, x)}$ is an $L$-quasimap, and $\deg(\phi_{(\lambda, x)}^*[N/G]) = -d <0$.
\end{proof}
This immediately implies the following special case.
\begin{corollary}
If  $\op{NS}(X)_\QQ \cong \QQ$, and  $L \in C^G(X)$, then $$C^\circ(L) = \op{relint}\left(\op{NE}(L)^\vee\right).$$
\end{corollary}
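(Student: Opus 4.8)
The plan is to deduce this corollary directly from Theorem~\ref{fixedt} by showing that the hypothesis $\op{NS}(X)_\QQ \cong \QQ$ forces the subspace $\op{NS}^G_P(X)_\QQ$ to coincide with all of $\op{NS}^G(X)_\QQ$ for a suitable choice of reference bundle $P$. Once this identification is established, Theorem~\ref{fixedt} is literally the desired statement, since the intersections with $\op{NS}^G_P(X)_\QQ$ appearing in the definitions of $A_P(L)$ and $\op{NE}_P(L)$ become trivial.

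First I would fix an ample line bundle $P \to X$ whose numerical class generates $\op{NS}(X)_\QQ \cong \QQ$; such a $P$ exists because the ample cone is nonempty and the group is one-dimensional. The key claim is that $\op{NS}^G_P(X)_\QQ = \op{NS}^G(X)_\QQ$. To prove it I would invoke the exact sequence from Section~\ref{s:vGIT},
$$0 \to \chi(G)_\QQ \to \op{NS}^G(X)_\QQ \to \op{NS}(X)_\QQ \to 0.$$
On one hand, $\op{NS}^G_P(X)_\QQ$ contains the classes of all $G$-linearizations of the zeroth power $P^{\otimes 0} = \cc O_X$, which are precisely the characters $\chi(G)_\QQ$; thus it contains the kernel of the forgetful map. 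On the other hand, some positive power $P^{\otimes n}$ admits a $G$-linearization $M$ (a standard fact in this setting), and the image of $[M]$ under the forgetful map is $n[P]$, a generator of $\op{NS}(X)_\QQ$. Hence $\op{NS}^G_P(X)_\QQ$ surjects onto the quotient $\op{NS}(X)_\QQ$, and by exactness it must be all of $\op{NS}^G(X)_\QQ$.

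With this identification in hand, the definitions give $A_P(L) = C^\circ(L) \cap \op{NS}^G_P(X)_\QQ = C^\circ(L)$ and $\op{NE}_P(L) = \op{NE}(L)$, so Theorem~\ref{fixedt} reads exactly as $C^\circ(L) = \op{relint}\left(\op{NE}(L)^\vee\right)$. I do not anticipate a serious obstacle; the argument is a clean specialization of the fixed-bundle result. The only point meriting care is the verification that a positive power of $P$ carries a $G$-linearization so that the forgetful map hits a generator of $\op{NS}(X)_\QQ$, which is a standard consequence of the reductivity of $G$ acting on the normal variety $X$.
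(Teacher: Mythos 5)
Your proposal is correct and is precisely the paper's intended argument: the paper derives this corollary as an immediate specialization of Theorem~\ref{fixedt}, and your verification that $\op{NS}^G_P(X)_\QQ = \op{NS}^G(X)_\QQ$ when $\op{NS}(X)_\QQ \cong \QQ$ (via the exact sequence $0 \to \chi(G)_\QQ \to \op{NS}^G(X)_\QQ \to \op{NS}(X)_\QQ \to 0$ and the fact that some positive power of $P$ admits a linearization) is exactly the detail the paper leaves implicit. Nothing further is needed.
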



%

%

\section{Quotients of affine varieties}\label{s:toric}
In this section we extend Theorem~\ref{maint}
to the affine setting.  Let $G$ be a reductive group acting linearly on a normal affine variety $V$.  In this case 
$ \op{Pic}^G(V) = \chi(G)$.
With this identification we will use $\theta$ to denote both a character of $G$ as well as the associated $G$-linearized line bundle on $V$.

Given $\theta \in \chi(G)$, define a $\theta$-quasimap quasimap  $$f: C \to [V/G]$$ 
exactly as in Section~\ref{s:q}. The degree of a morphism $f: C \to [V/G]$ lies in $\Hom(\chi(G), \ZZ)$.  
Let $\op{NE}(\theta)$ denote the cone of degrees of $\theta$-quasimaps to $[V/G]$.

Following Definition~\ref{cones}, for $\theta$ a $G$-effective character in $\chi(G)_\QQ$,
let $$C_V(\theta) = \{\theta' \in \chi(G)_\QQ | V^{ss}(\theta) \subset V^{ss}(\theta')\}.$$ Let $A(\theta)$ denote the GIT equivalence class for a character $\theta \in \chi(G)$.  

To follow the arguments of the previous section, we require appropriate modifications of Ressayre's theorem (Theorem~\ref{Res}) and the Hilbert--Mumford criterion in the case of a group acting on an affine variety.  The generalization of Theorem~\ref{Res} to the affine setting was given by Halic \cite{Ha}.
\begin{theorem}[\cite{Ha}]\label{c:R2}
The sets  $C_V(\theta)$ are closed convex rational polyhedral cones which form a fan in $\chi(G)_\QQ$.  The GIT equivalence class $A(\theta)$ is the relative interior of $C_V(\theta)$.
\end{theorem}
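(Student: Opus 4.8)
The plan is to mirror the proof of Ressayre's projective theorem (Theorem~\ref{Res}), replacing the ample Hilbert--Mumford criterion (Theorem~\ref{t:HM}) by its affine counterpart due to King. First I would record King's criterion: for $V$ affine and $\theta \in \chi(G)$, a point $v \in V$ is $\theta$-semistable if and only if $\langle \theta, \lambda\rangle \geq 0$ for every 1-PS $\lambda$ such that $\lim_{t \to 0}\lambda(t)\cdot v$ exists in $V$. Here the existence of the limit plays the role that properness of $X$ played in the projective setting, where it guaranteed that $\bar\lambda_x$ extended over $0$. One checks this convention against the example $T=\GM$ acting on $\af^1$ by scaling with $\theta=\op{id}$: the only 1-PS with limits at a nonzero point are $t\mapsto t^n$ with $n>0$, all pairing positively with $\theta$, so $V^{ss}(\theta)=\GM$ as expected. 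With this in hand, for each fixed $v$ the locus of characters making $v$ semistable is the intersection of the half-spaces $\{\langle\theta,\lambda\rangle \geq 0\}$ as $\lambda$ ranges over the 1-PS's adapted to $v$, a closed convex cone in $\chi(G)_\QQ$.

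Second, I would establish the key finiteness statement: only finitely many linear functionals $\langle\,\cdot\,,\lambda\rangle$ are needed to cut out all the semistable loci as $\theta$ varies. This follows from the Kempf--Hesselink--Ness theory of optimal destabilizing 1-PS together with a boundedness argument: up to conjugation and scaling, the destabilizing $\lambda$ may be drawn from a finite set determined by the finitely many $T$-weights of the linear $G$-action on $V$, for $T\subset G$ a maximal torus. Consequently the arrangement of rational hyperplanes $\{\langle\,\cdot\,,\lambda\rangle = 0\}$ is finite, and it cuts $\chi(G)_\QQ$ into a finite polyhedral wall-and-chamber decomposition on whose open chambers the sign vector $(\op{sign}\langle\theta,\lambda\rangle)_\lambda$ --- and hence, by King's criterion, the semistable locus $V^{ss}(\theta)$ --- is constant.

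Third, I would identify the cones. For $\theta$ in an open chamber, the condition $V^{ss}(\theta)\subseteq V^{ss}(\theta')$ forces $\langle\theta',\lambda\rangle \geq 0$ for every $\lambda$ adapted to a point of $V^{ss}(\theta)$, and conversely these inequalities suffice; by the finiteness step this exhibits $C_V(\theta)$ as a closed, convex, rational polyhedral cone, namely the closure of the chamber of $\theta$ together with those faces along which the semistable locus only enlarges. Compatibility of these closures along shared walls is exactly the fan condition. For the relative-interior statement one checks both inclusions: if $V^{ss}(\theta') = V^{ss}(\theta)$ then $\theta'$ lies in the same open chamber, hence in $\op{relint}(C_V(\theta))$; conversely a character in $\op{relint}(C_V(\theta))$ satisfies the same strict inequalities as $\theta$, so the sets of adapted 1-PS --- and thus the semistable loci --- agree.

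The main obstacle will be the finiteness step: in the affine case one must restrict to 1-PS's whose limit exists, and proving that a single finite hyperplane arrangement governs every $V^{ss}(\theta)$ rests on the boundedness of the optimal destabilizers, which is the technical heart of the argument. An alternative that avoids developing this theory from scratch is to reduce to the already-cited projective Theorem~\ref{Res}: realize the affine $\theta$-GIT of $V\subset \af^N$ as projective GIT on a $G$-equivariant completion such as $\PP(\af^N \oplus k)$, with a linearization assembled from $\cc O(1)$ and $\theta$, and then transport the fan structure and the relative-interior identification back along this comparison of semistable loci.
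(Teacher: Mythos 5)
The paper does not prove this statement at all: it is imported wholesale from Halic's work (the citation \cite{Ha}), just as Theorem~\ref{Res} is imported from Ressayre, so there is no internal argument to compare yours against. Judged on its own terms, your outline follows what is indeed the standard strategy for such results (King's numerical criterion plus a finiteness statement for the walls), and your sign-convention sanity check against the $\GM$-on-$\af^1$ example is consistent with the paper's Theorem~\ref{l:HMa}. But as a proof it is a skeleton rather than a body. The finiteness step, which you correctly flag as the technical heart, is the entire content of the theorem: you need that the infinite intersection of half-spaces $\{\langle\cdot,\lambda\rangle\geq 0\}$ indexed by all pairs $(v,\lambda)$ with $v\in V^{ss}(\theta)$ and $\lambda$ adapted to $v$ reduces to a finite one, and "boundedness of optimal destabilizers determined by the $T$-weights" is a slogan, not an argument (note that the functionals $\langle\cdot,\lambda\rangle$ for $\lambda$ in the cocharacter lattice of a maximal torus already give infinitely many distinct hyperplanes in $\chi(G)_\QQ$ once its rank is at least two; what must be shown is that only finitely many occur as genuine walls, which requires the reduction to $T$-semistability via conjugation and the combinatorics of weight supports). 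The fan condition --- that distinct cones $C_V(\theta)$, $C_V(\theta')$ meet along common faces --- is asserted in one sentence, and your treatment of the relative-interior identification only addresses $\theta$ lying in an open chamber, whereas the theorem covers $\theta$ on a wall, where $C_V(\theta)$ is a lower-dimensional cone and the equality $A(\theta)=\op{relint}(C_V(\theta))$ needs a separate argument that the semistable locus strictly shrinks when one moves off the wall. Your proposed shortcut via a $G$-equivariant projective completion and Theorem~\ref{Res} is a legitimate alternative, but it has its own gaps: the linearizations of the completion coming from $\chi(G)$ sit inside a larger $\op{NS}^G$, the semistable locus upstairs may meet the boundary divisor, and one must verify that the fan structure restricts correctly to the slice $\chi(G)_\QQ$. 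In short: right roadmap, but the load-bearing steps are named rather than carried out.
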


Next we state the analogue of the Hilbert--Mumford criterion for affine varieties, which was obtained by King in \cite{King}.  As with the Hilbert--Mumford criterion for projective varieties (Theorem~\ref{t:HM}), this is a consequence of the fact that when $G$ is a reductive group stability can be tested using 1-PS's.  In this case, for a 1-PS $\lambda: \GM \hookrightarrow G$, there is no guarantee that the map $\lambda_v: \GM \to V$ given by $t \mapsto \lambda(t) \cdot v$ extends to a morphism $\bar \lambda_v: \af^1 \to V$.  It turns out that one can effectively ignore those 1-PS's for which $\lambda_v$ does not extend when testing for stability.  When $\bar \lambda_v: \af^1 \to V$ exists, the weight $\rho^\theta_{(\lambda, v)}$ is defined as in Section~\ref{HMtest}.
\begin{theorem}[\cite{King, GHH}]\label{l:HMa}
A point $v \in V$ is $\theta$-semi-stable if and only if 
$\rho^\theta_{(\lambda, v)} \geq 0$ for all  1-PS's $\lambda: \GM \hookrightarrow G$ such that $\lambda_v: \GM \to V$ extends to $\af^1$.
\end{theorem}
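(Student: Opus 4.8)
The plan is to reduce the affine Hilbert--Mumford criterion to a statement about orbit closures for an auxiliary linear action, where one can invoke the classical Hilbert--Mumford theorem for affine quotients. The forward implication is a direct weight computation paralleling Proposition~\ref{p:posdeg}. Suppose $v$ is $\theta$-semi-stable, so there is $n \geq 1$ and a semi-invariant $f \in k[V]$ of weight $\theta^n$ (i.e. $f(g\cdot w) = \theta(g)^n f(w)$) with $f(v) \neq 0$. For any 1-PS $\lambda$ with $\lambda_v$ extending to $\af^1$, the regular function $t \mapsto f(\lambda(t)\cdot v) = \theta(\lambda(t))^n f(v) = t^{n\langle\theta,\lambda\rangle}f(v)$ extends over $t=0$ because $\lambda(t)\cdot v$ does; since $f(v)\neq 0$ this forces $\langle\theta,\lambda\rangle \geq 0$, i.e. $\rho^\theta_{(\lambda,v)}\geq 0$ (the sign conventions of Section~\ref{HMtest} being fixed precisely so that $\rho^\theta_{(\lambda,v)}$ and $\langle\theta,\lambda\rangle$ have the same sign, as this same computation shows).

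For the converse I would argue the contrapositive via the following construction. Let $G$ act on $V \times \af^1$ by $g\cdot(w,z) = (g\cdot w,\, \theta(g)^{-1} z)$. A short computation identifies the invariant ring $k[V\times\af^1]^G$ with $\bigoplus_{n\geq 0} R_n\, z^n$, where $R_n = \{f \in k[V] : f(g\cdot w) = \theta(g)^n f(w)\}$ is the space of weight-$\theta^n$ semi-invariants. Writing $\tilde v = (v,1)$ and $Z = V \times \{0\}$, every invariant of positive $z$-degree is divisible by $z$ and so vanishes on the closed $G$-invariant subset $Z$, while $v$ is $\theta$-semi-stable precisely when some such invariant is nonzero at $\tilde v$. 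By reductivity of $G$, disjoint closed $G$-invariant subsets are separated by invariant functions, and invariants are constant on orbit closures; combining these, $v$ is $\theta$-semi-stable if and only if $\overline{G\cdot \tilde v}$ is disjoint from $Z$. Thus $v$ is $\theta$-unstable if and only if $\overline{G\cdot\tilde v}\cap Z \neq \emptyset$.

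It then suffices to produce a 1-PS detecting this intersection. Since $\overline{G\cdot\tilde v}\cap Z$ is nonempty, closed, and $G$-invariant, it contains the unique closed orbit of $\overline{G\cdot\tilde v}$; by the Hilbert--Mumford theorem for orbit closures, there is a 1-PS $\lambda$ of $G$ with $\lim_{t\to 0}\lambda(t)\cdot\tilde v$ existing and lying in $Z$. Unwinding, $\lambda(t)\cdot\tilde v = (\lambda(t)\cdot v,\ \theta(\lambda(t))^{-1})$, so the existence of this limit means simultaneously that $\lambda_v$ extends to $\af^1$ and that $\theta(\lambda(t))^{-1} = t^{-\langle\theta,\lambda\rangle}\to 0$, i.e. $\langle\theta,\lambda\rangle < 0$, equivalently $\rho^\theta_{(\lambda,v)} < 0$. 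This is exactly a 1-PS violating the stated inequality, which completes the contrapositive.

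The step I expect to be the main obstacle is the appeal to the Hilbert--Mumford theorem for orbit closures producing $\lambda$ with its limit inside the prescribed set $Z$: this is the one genuinely non-formal input (classical over $\CC$, and valid in arbitrary characteristic by Kempf's work on instability), and it must be invoked in the sharp form that the limit lands in the prescribed closed invariant subset, not merely in some closed orbit. A secondary, purely bookkeeping point is to match the normalization of $\rho^\theta_{(\lambda,v)}$ from Section~\ref{HMtest} with the pairing $\theta(\lambda(t)) = t^{\langle\theta,\lambda\rangle}$; and one must check that restricting to 1-PS for which $\lambda_v$ extends costs nothing, which is automatic here since every $\lambda$ arising from the orbit-closure argument extends by construction.
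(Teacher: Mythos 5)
The paper does not actually prove this statement: it is quoted with attribution to \cite{King, GHH}, so there is no internal proof to compare against. Your argument is, in substance, a correct reconstruction of King's original proof, generalized from representation spaces to affine $G$-varieties (where it still works): the forward direction by evaluating a weight-$\theta^n$ semi-invariant along the extension $\bar\lambda_v:\af^1\to V$, and the converse by passing to $V\times\af^1$ with the twisted action, identifying $\theta$-semi-stability of $v$ with $\overline{G\cdot(v,1)}\cap(V\times\{0\})=\emptyset$, and then invoking the Kempf--Birkes form of the Hilbert--Mumford theorem, in which the destabilizing one-parameter subgroup can be chosen so that its limit lands in a \emph{prescribed} closed invariant subset. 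You correctly isolate that last step as the one non-formal input, and your observation that the extension of $\lambda_v$ to $\af^1$ comes for free from the existence of the limit of $\lambda(t)\cdot\tilde v$ is exactly the point that makes the restricted quantifier in the statement harmless. The only place where I would ask for one more line is the ``combining these'' step: to conclude that instability of $v$ forces $\overline{G\cdot\tilde v}\cap Z\neq\emptyset$, note that any invariant separating these two closed invariant sets would have vanishing $z$-degree-zero part (since it vanishes on $Z$) and hence, by instability, would vanish at $\tilde v$ as well, contradicting separation; this is the intended use of the reductivity lemma and the argument is sound as written. The sign bookkeeping relating $\rho^\theta_{(\lambda,v)}$ to $\langle\theta,\lambda\rangle$ is handled adequately, and the appeal to geometric reductivity and to Kempf's instability theorem keeps the proof valid in arbitrary characteristic, consistent with the paper's hypotheses.
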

 We conclude with the following analogue of Theorem~\ref{maint}.
\begin{theorem}\label{vect}
Let $G$ be a reductive group acting linearly on a normal affine variety $V$.  Then
for each $\theta \in \chi(G)$ with $V^{ss}(\theta) \neq \emptyset$, we have
$$A(\theta) = \op{relint}\left(\op{NE}(\theta)^\vee\right).$$
\end{theorem}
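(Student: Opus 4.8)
The plan is to mirror the structure of the proof of Theorem~\ref{maint} in the affine setting, replacing Ressayre's theorem by Halic's (Theorem~\ref{c:R2}) and the projective Hilbert--Mumford criterion by King's (Theorem~\ref{l:HMa}). By Theorem~\ref{c:R2} we have $A(\theta) = \op{relint}(C_V(\theta))$, so it suffices to establish the equality of cones
$$C_V(\theta) = \op{NE}(\theta)^\vee$$
inside $\chi(G)_\QQ$ (noting that $\op{NE}(\theta) \subset \Hom(\chi(G),\QQ)$ has dual cone naturally living in $\chi(G)_\QQ$); taking relative interiors then yields the theorem.

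First I would establish the inclusion $C_V(\theta) \subset \op{NE}(\theta)^\vee$, the exact analogue of Corollary~\ref{c:1}. Since Proposition~\ref{p:posdeg} remains valid for quasiprojective, and hence affine, $V$ by Remark~\ref{rgen}, the argument of Corollary~\ref{c:1} carries over verbatim: if $\theta' \in C_V(\theta)$ then $V^{ss}(\theta) \subset V^{ss}(\theta')$, so every $\theta$-quasimap is also a $\theta'$-quasimap and therefore pairs non-negatively with $\theta'$.

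The substance lies in the reverse inclusion $\op{NE}(\theta)^\vee \subset C_V(\theta)$, which I would prove by contraposition in the spirit of Proposition~\ref{p:2}: given $\theta' \in \chi(G)_\QQ \setminus C_V(\theta)$, I will produce a $\theta$-quasimap $f$ with $\deg(f^*[\theta'/G]) < 0$, showing $\theta' \notin \op{NE}(\theta)^\vee$. Since $\theta' \notin C_V(\theta)$ means $V^{ss}(\theta) \not\subset V^{ss}(\theta')$, there is a point $v \in V^{ss}(\theta) \setminus V^{ss}(\theta')$. Because $v$ fails $\theta'$-semistability, King's criterion (Theorem~\ref{l:HMa}) furnishes a 1-PS $\lambda$ with $\lambda_v$ extending to $\af^1$ and $\rho^{\theta'}_{(\lambda, v)} < 0$. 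Feeding this $\lambda$ into Construction~\ref{const} produces a quasimap $\phi_{(\lambda, v)}: \PP^1 \to [V/G]$; since $v$ is $\theta$-semistable and semistability is $G$-invariant, the generic point of $\PP^1$ lands in $[V^{ss}(\theta)/G]$, so $\phi_{(\lambda, v)}$ is a genuine $\theta$-quasimap, and by the lemma of Section~\ref{s:mc} its pairing with $\theta'$ equals $\rho^{\theta'}_{(\lambda, v)} < 0$.

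The main obstacle is precisely the point where the affine and projective arguments diverge. In Construction~\ref{const}, forming $\hat\phi_{(\lambda, v)}(s,t) = \bar\lambda_v(t)$ requires the extension $\bar\lambda_v: \af^1 \to V$, which over a projective $X$ is automatic by properness but over an affine $V$ can genuinely fail. The resolution is that King's criterion is itself formulated using only those 1-PS for which $\lambda_v$ extends, so the destabilizing $\lambda$ it produces is automatically of the type that Construction~\ref{const} accepts; verifying this compatibility is the delicate step. Once both inclusions are in hand we conclude $C_V(\theta) = \op{NE}(\theta)^\vee$, and applying $\op{relint}$ together with Theorem~\ref{c:R2} completes the proof.
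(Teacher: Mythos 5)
Your proposal is correct and follows essentially the same route as the paper's proof: the inclusion $C_V(\theta) \subset \op{NE}(\theta)^\vee$ via Remark~\ref{rgen}, the reverse inclusion by producing $\phi_{(\lambda,v)}$ from a destabilizing 1-PS supplied by King's criterion, and the final passage to relative interiors via Halic's theorem. You have also correctly identified the one genuinely affine subtlety — that Theorem~\ref{l:HMa} only quantifies over 1-PS's whose orbit map extends to $\af^1$, which is exactly what Construction~\ref{const} requires — and this is precisely how the paper handles it.
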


\begin{proof}
The argument follows the same steps as the proof of Theorem~\ref{maint}.
By \cite[Lemma~3.2.1]{CKM} (or Remark~\ref{rgen}), if a morphism $f: C \to [V/G]$ is a $\theta$-quasimap, then $\deg(f^*[\cc O_V(\theta)/G]) \geq 0$.  Therefore, 
$$C_V(\theta) \subset \op{NE}(\theta)^\vee.$$

Next, assume $\kappa \notin C_V(\theta)$.  Then there exists a point $ v \in V^{ss}(\theta) \setminus V^{ss}(\kappa)$.  By Theorem~\ref{l:HMa}, there exists a 1-PS $\lambda: \GM \hookrightarrow G$ such that
$\bar \lambda_v: \af^1 \to V$ exists and $\rho^\kappa_{(\lambda, v)} < 0$.
Using $\bar \lambda_v$, we can define $\phi_{(\lambda, v)}: \PP^1 \to [V/G]$ as in Construction~\ref{const}. 
Then $\phi_{(\lambda, v)}$ is a $\theta$-quasimap such that $\deg(\phi_{(\lambda, v)}^*[\cc O_V(\kappa)/G])<0$.  

Consequently, 
$$ C_V(\theta)^c \subset ({\op{NE}(\theta)^\vee})^c,$$
and therefore $$C_V(\theta) = {\op{NE}(\theta)^\vee}.$$
By Corollary~\ref{c:R2}, $A(\theta) = \op{relint}(C_V(\theta)).$ This proves the theorem. 
\end{proof}

%

\bibliographystyle{plain}
\bibliography{references2}

\end{document}